\setlist[enumerate]{label=(\roman*), font=\normalfont}
\def\cCrefname#1#2#3{%
    \crefname{#1}{#2}{#3}%
    \Crefname{#1}{#2}{#3}%
}
\def\crefparenedname#1#2#3{%
    \crefformat{#1}{#2##2##1##3#3}%
    \crefrangeformat{#1}{#2##3##1##4#3\crefrangeconjunction#2##5##2##6#3}%
    \crefmultiformat{#1}{#2##2##1##3#3}{\crefpairconjunction#2##2##1##3#3}{\crefmiddleconjunction#2##2##1##3#3}{\creflastconjunction#2##2##1##3#3}%
}
\def\Crefparenedname#1#2#3{%
    \Crefformat{#1}{#2##2##1##3#3}%
    \Crefrangeformat{#1}{#2##3##1##4#3\crefrangeconjunction#2##5##2##6#3}%
    \Crefmultiformat{#1}{#2##2##1##3#3}{\crefpairconjunction#2##2##1##3#3}{\crefmiddleconjunction#2##2##1##3#3}{\creflastconjunction#2##2##1##3#3}%
}
\def\cCrefparenedname#1#2#3{%
    \crefparenedname{#1}{#2}{#3}%
    \Crefparenedname{#1}{#2}{#3}%
}
\newtheorem{thm}{Theorem}[section]
\newtheorem{prop}[thm]{Proposition}
\newtheorem{cor}[thm]{Corollary}
\newtheorem{lem}[thm]{Lemma}
\newtheorem{q}[thm]{Question}
\theoremstyle{definition}
\newtheorem{defi}[thm]{Definition}
\newtheorem{ex}[thm]{Example}
\theoremstyle{remark}
\newtheorem{rem}[thm]{Remark}
\newcommand{\calA}{\mathcal{A}}
\newcommand{\calC}{\mathcal{C}}
\newcommand{\calE}{\mathcal{E}}
\newcommand{\calG}{\mathcal{G}}
\newcommand{\calP}{\mathcal{P}}
\newcommand{\calM}{\mathcal{M}}
\newcommand{\calT}{\mathcal{T}}
\newcommand{\ZZ}{\mathbb{Z}}
\newcommand{\RR}{\mathbb{R}}
\newcommand{\kk}{\Bbbk}
\newcommand{\eb}{\mathbf{e}}
\newcommand{\fkG}{\mathfrak{G}}
\newcommand{\set}[1]{\left\{ #1 \right\}}
\newcommand{\setcond}[2]{\set{#1 : #2}}
\newcommand{\rbra}[1]{\left( #1 \right)}
\DeclarePairedDelimiter{\floor}{\lfloor}{\rfloor}
\DeclarePairedDelimiter{\card}{\lvert}{\rvert}
\DeclareMathOperator{\conv}{conv}
\let\int\relax
\DeclareMathOperator{\int}{int}
\begin{document}

\title{Pseudo-Gorenstein edge rings and a new family of almost Gorenstein edge rings}
\author{Yuta Hatasa, Nobukazu Kowaki, Koji Matsushita}

\address[Y. Hatasa]{
    Department of Mathematics,
    Tokyo Institute of Technology, 2--12--1 \=Ookayama, Meguro-ku,
    Tokyo 152--8551, Japan
}
\address[Y. Hatasa]{
    International Institute for Sustainability with Knotted Chiral Meta Matter (WPI-SKCM$^2$),
    Hiroshima University, 1--3--1 Kagamiyama, Higashi-Hiroshima, Hiroshima 739--8526, Japan
}
\email{hatasa.y.aa@m.titech.ac.jp}

\address[N. Kowaki]{Department of Pure and Applied Mathematics, Graduate School of Information Science and Technology, Osaka University, Suita, Osaka 565-0871, Japan}
\email{u793177f@ecs.osaka-u.ac.jp}

\address[K. Matsushita]{Department of Pure and Applied Mathematics, Graduate School of Information Science and Technology, Osaka University, Suita, Osaka 565-0871, Japan}
\email{k-matsushita@ist.osaka-u.ac.jp}

\subjclass[2020]{
    Primary
    13D40; %Hilbert-Samuel and Hilbert-Kunz functions; Poincaré series
    Secondary
    13F55, %Combinatorial aspects of commutative algebra 
    13F65, %Commutative rings defined by binomial ideals, toric rings
    05C25, %Graphs and abstract algebra
}
\keywords{edge rings, $h$-vector, pseudo-Gorenstein, almost Gorenstein}

\begin{abstract}
    In this paper, we study edge rings and their $h$-polynomials.
    We investigate when edge rings are pseudo-Gorenstein, which means that the leading coefficients of the $h$-polynomials of edge rings are equal to 1.
    Moreover, we compute the $h$-polynomials of a special family of edge rings and show that some of them are almost Gorenstein.
\end{abstract}

\maketitle

\section{Introduction}

\subsection{Backgrounds}

Let $R = \bigoplus_{k\ge 0} R_k$ be a Cohen--Macaulay homogeneous domain of dimension $d$ over an algebraically closed field $R_0 = \kk$ with characteristic $0$.
Then the \textit{Hilbert series} of $R$ is defined as the formal power series $\sum_{k\ge 0}(\dim_\kk R_k)t^k$, and it is known that we can write the Hilbert series of $R$ as the following form:
\[
    \sum_{k \ge 0} (\dim_\kk R_k) t^k=\frac{h_0+h_1t+\cdots+h_st^s}{(1-t)^d},
\]
where $h_s \neq 0$.
We call the polynomial $h_0+h_1t+\cdots+h_st^s$ the \textit{$h$-polynomial} of $R$, denoted by $h(R;t)$, and call the sequence $(h_0,h_1,\ldots,h_s)$ the \textit{$h$-vector} of $R$.
In addition, we also call the index $s$ the \textit{socle degree} of $R$, and denoted by $s(R)$.

The $h$-polynomial ($h$-vector) of $R$ is one of the most important invariants in the theory of commutative algebra because it tells us what commutative-algebraic properties $R$ has.
In fact, it is well known that $R$ is Gorenstein if and only if $R$ has the symmetric $h$-vector, that is, $h_i=h_{s(R)-i}$ holds for each $i=0,\ldots,\floor{s(R)/2}$ (\cite{stanley1978hilbert}).
Moreover, recent studies have shown that the $h$-vector has connections not only to Gorensteinness, but also to its generalized notions such as levelness (\cite{stanley1976cohen}), almost Gorensteinness (\cite{goto2015almost}) and nearly Gorensteinness (\cite{herzog2019trace}).
For example, the following facts are known:
\begin{itemize}
    \item $R$ is level if and only if the Cohen--Macaulay type of $R$ is equal to $h_{s(R)}$ (see \cite[3.2~Proposition]{stanley1996comb}). This implies that if $R$ is level, then $h_{s(R)}=1$ if and only if $R$ is Gorenstein;
    \item If $s(R)\ge 2$ and $R$ is almost Gorenstein, then $h_{s(R)}=1$ (\cite[Theorem~4.7]{higashitani2016almost});
    \item If $R$ is nearly Gorenstein, then $h_{s(R)}=1$ if and only if $R$ is Gorenstein (\cite[Theorem~3.6]{miyashita2024levelness}).
\end{itemize}
These show that the leading coefficient $h_{s(R)}$ of the $h$-polynomial of $R$ has wealth information.
In particular, these motivate determining when $h_{s(R)}=1$.
A Cohen--Macaulay homogeneous ring $R$ is called \textit{pseudo-Gorenstein} if $h_{s(R)}=1$ (\cite{ene2015pseudo}), and certain classes of pseudo-Gorenstein rings are characterized (see, e.g., \cite{ene2015pseudo,rinaldo2023level2,rinaldo2023level}).

In this paper, we study the $h$-polynomials and pseudo-Gorensteinness of certain homogeneous domains, called \textit{edge rings}.

\medskip

For a finite simple graph $G$ on the vertex set $V(G)=[d] \coloneqq \{1,\ldots,d\}$ with the edge set $E(G)$, we define the \textit{edge ring} of $G$ as follows:
\[
    \kk[G] \coloneqq \kk[t_i t_j : \{i,j\}\in E(G)]\subset \kk[t_1,\ldots,t_d].
\]

Edge rings began to be studied by Ohsugi–Hibi (\cite{ohsugi1998normal}) and Simis–Vasconcelos–Villarreal (\cite{simis1998integral}).
Since then, many researchers have studied the commutative ring-theoretic properties of edge rings.
In particular, the $h$-polynomials of the edge rings have been investigated.
As far as we know, the $h$-vectors of the edge rings of the following graphs have been computed:
\begin{itemize}
    \item Complete graphs and complete bipartite graphs (see \cite[Section~10.6]{villarreal2001monomial});
    \item Bipartite graphs (via interior polynomials) (\cite{kalman2017root});
    \item A family of graphs composed of a complete bipartite graph and a cycle graph (\cite{galetto2019betti});
    \item A family of graphs consisting of odd cycles that share a single common vertex (\cite{bhaskara2023h,higashitani2023h}).
\end{itemize}
Moreover, Gorensteinness, leveleness, almost Gorensteinness, nearly Gorensteinness and pseudo-Gorensteinness of edge rings have been investigated, respectively (see \cite{ohsugi2006GorEdge,higashitani2022levelness,higashitani2023h,bhaskara2023h,hall2023nearly,guan2022coefficients}).

\medskip

There are two goals of this paper; one is to characterize when edge rings are pseudo-Gorenstein.
Another one is to find a new family of edge rings having commutative ring-theoretic properties mentioned above and to examine the behavior of their $h$-polynomials.
In this paper, we focus on almost Gorenstein edge rings and their $h$-polynomials.

\bigskip

\subsection{Results}
First, we completely characterize when the edge rings of bipartite graphs are pseudo-Gorenstein:

\begin{thm}[{\Cref{thm:char_hs=1} and \Cref{cor:char_hs=1}}]\label{main1}
    Let $G$ be a bipartite graph.
    Then the edge ring $\kk[G]$ is pseudo-Gorenstein if and only if every block of $G$ is matching-covered.
\end{thm}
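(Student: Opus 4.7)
My plan proceeds in three stages.

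First, since $\kk[G_1 \sqcup G_2] \cong \kk[G_1] \otimes_\kk \kk[G_2]$, the $h$-polynomials multiply across connected components, and the blocks of a disjoint union are the blocks of its components; hence we may assume $G$ is connected. Second, whenever $G = G_1 \cup G_2$ share a single cut vertex $v$, I would establish the multiplicative formula
\[
    h(\kk[G];t) = h(\kk[G_1];t) \cdot h(\kk[G_2];t),
\]
either by viewing $P_G$ as a Cayley-type amalgamation of $P_{G_1}$ and $P_{G_2}$ glued at $v$ and computing its $h^*$-polynomial directly, or by analyzing the toric ideal of $\kk[G]$ as built from those of the $\kk[G_i]$. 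Iterating yields $h(\kk[G];t) = \prod_i h(\kk[B_i];t)$ over the blocks $B_i$, and since leading coefficients multiply, the theorem reduces to the block-level statement.

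Third, I analyze a single block $B$. A bridge block is $K_2$, for which $\kk[K_2] \cong \kk[x]$ is pseudo-Gorenstein and the condition is vacuously matched. For a 2-connected bipartite block $B$ with bipartition $V_1 \sqcup V_2$, $\kk[B]$ is normal, so its $h$-polynomial equals the $h^*$-polynomial of the edge polytope $P_B$, and Stanley reciprocity gives the leading coefficient as $|\int(c P_B) \cap \ZZ^{V(B)}|$ where $c = \operatorname{codeg}(P_B)$. Since 2-connectivity forces each hyperplane $\{x_i = 0\}$ to define a facet of $P_B$, every interior lattice point $(x_i)$ of $k P_B$ satisfies $x_i \ge 1$ along with $\sum_{i \in V_j} x_i = k$ for $j = 1, 2$. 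This forces $k \ge \max(|V_1|, |V_2|)$, and when $|V_1| = |V_2| = n$, the point $\mathbf{1}$ is the only possible candidate in $n P_B$. By the Birkhoff--von Neumann theorem, $\mathbf{1} \in \int(n P_B)$ iff $B$ admits a positive fractional perfect matching, iff $B$ is matching-covered; so matching-covered $B$ yields $h_s = 1$. For the converse, when $B$ is not matching-covered — either because the bipartition is unbalanced or because some edge lies in no perfect matching — I would exhibit at least two distinct interior lattice points of the minimal dilate, leveraging the freedom coming from an unbalanced partition or from 2-connectivity producing two different deficiency configurations.

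The hardest part will be the converse direction in the 2-connected case: producing, for a non-matching-covered 2-connected bipartite $B$, at least two interior lattice points of the minimal dilate of $P_B$. The block factorization across cut vertices is also not automatic and may require a dedicated lemma on the Hilbert series of edge rings glued at a vertex; however, it is structurally the cleaner of the two and likely follows from known polytope-combinatorial arguments.
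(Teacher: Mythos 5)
Your reduction to blocks is fine --- the tensor decomposition $\kk[G]\cong\kk[B_1]\otimes\cdots\otimes\kk[B_n]$ is exactly \Cref{product} and need not be reproved --- and your forward direction for a $2$-connected block is a valid alternative to the paper's: the paper also identifies $\mathbf{1}=v_\calE$ as the unique lattice point of $\int(\ell_B P_B)$ when $B$ is matching-covered, but checks $\mathbf{1}\in H_T^>$ via the Hall-type condition $\card{N_B(T)}>\card{T}$ of \Cref{matching-covered} rather than via Birkhoff--von Neumann; these are essentially interchangeable.

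The genuine gap is the converse. To conclude $h_s\ge 2$ you must produce two interior lattice points at exactly the dilate $\ell_B=-a(\kk[B])$; interior points of any larger dilate say nothing about $h_s$, since $h_s=\card{\int(\ell_B P_B)\cap\ZZ^{V(B)}}$ only at the minimal dilate. Your proposal never determines $\ell_B$ for a non-matching-covered block, and your bound $k\ge\max(\card{V_1},\card{V_2})$ is not an equality in general: if $\card{V_1}=\card{V_2}=n$ but $B$ is not matching-covered, then $\phi(B)$ is odd and $\ge 3$, so \Cref{thm:relation} gives $\ell_B=(\phi(B)+2n-1)/2\ge n+1$, and the dilate $n$ you analyze contains no interior lattice points at all --- the ``only candidate $\mathbf{1}$'' discussion is vacuous there, and the two points you need live in a higher dilate whose value you have no way to pin down. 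The paper closes exactly this hole with two nontrivial inputs: the codegree formula $\ell_B=(\phi(B)+\card{V(B)}-1)/2$ coming from Frank's conservative-weighting results (\Cref{thm:relation}), and the explicit interior point $v_\calE$ built from an optimal ear decomposition (\Cref{lem:interior}), which is then perturbed along an even ear to a second point $v_{\calE'}$ of the same degree. ``Two different deficiency configurations'' is a plausible slogan but not yet an argument; without a handle on the codegree and a concrete second point at that codegree, the converse does not go through.
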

\noindent We also derive some corollaries from \Cref{main1} (\Cref{cor:hamilton,cor:biparregular,cor:evenver}).

Moreover, we investigate the case of non-bipartite graphs in \Cref{subsec:non-bip}.
This case becomes much more difficult than the bipartite graph case, and the assertions of \Cref{main1} and corollaries mentioned above are no longer true.
However, the assertions can still be ensured with the addition of certain assumptions (\Cref{thm:regular_hs=1,thm:MatchingNotBipar}).

\medskip

Next, we give a new family of almost Gorenstein edge rings for which their $h$-vectors are computed.
For integers $m, n \ge 3$ and $0 \le r \le \min\{m, n\}$, let $G_{m, n, r}$ be the graph obtained from a
complete bipartite graph $K_{m, n}$ by removing a matching $M$ with $\card{M} = r$ (see \Cref{subsec:new_family} for the precise definition of $G_{m,n,r}$).

\begin{thm}[{\Cref{prop:hilbG} and \Cref{thm:almGnr}}]\label{main2}
    We have
    \[
        h_{\kk[G_{m, n, r}]}(t) = 1 +\rbra{(m - 1)(n - 1) - r} t
        + \sum_{i = 2}^{\min\{m, n\}} \binom{m - 1}{i} \binom{n - 1}{i} t^i.
    \]
    Moreover, $\kk[G_{m, n, r}]$ is almost Gorenstein if and only if $m = n$.
\end{thm}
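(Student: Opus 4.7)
The theorem consists of two statements: an explicit formula for the $h$-polynomial and a characterization of when $\kk[G_{m,n,r}]$ is almost Gorenstein. I would prove them in that order.

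\textbf{Step 1: The $h$-polynomial.} My strategy is to exploit the fact that $G_{m,n,r}$ is obtained from $K_{m,n}$ by removing a matching, together with the classical formula $h_{\kk[K_{m,n}]}(t) = \sum_i\binom{m-1}{i}\binom{n-1}{i}t^i$ (obtainable from the Segre description of $\kk[K_{m,n}]$ or from the staircase unimodular triangulation of $\Delta_{m-1}\times\Delta_{n-1}$). Since $G_{m,n,r}$ is bipartite, by K\'alm\'an's theorem \cite{kalman2017root} the $h$-polynomial $h_{\kk[G_{m,n,r}]}(t)$ equals the interior polynomial of $G_{m,n,r}$, which is computed combinatorially from hypertrees. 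I would show that deleting a matching $M$ reduces only the coefficient of $t$ by $r$ while preserving all higher coefficients; this can be traced to the fact that the edges of $M$ are pairwise vertex-disjoint, so that their contributions to the interior polynomial at degrees $\geq 2$ decouple in a controlled way and cancel in the comparison with $K_{m,n}$.

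\textbf{Step 2: Necessity of $m = n$.} Suppose $\kk[G_{m,n,r}]$ is almost Gorenstein and assume $m \le n$ without loss of generality. From the formula above, the socle degree is $s = m - 1 \ge 2$ (since $m \ge 3$), and the leading coefficient is $h_s = \binom{m-1}{m-1}\binom{n-1}{m-1} = \binom{n-1}{m-1}$. By \cite[Theorem~4.7]{higashitani2016almost}, cited in the introduction, $h_s = 1$, which forces $\binom{n-1}{m-1} = 1$ and hence $m = n$.

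\textbf{Step 3: Sufficiency and main obstacle.} For the converse, assume $m = n$. When $r = 0$, $\kk[K_{n,n}]$ is Gorenstein because its $h$-polynomial $\sum_i\binom{n-1}{i}^2 t^i$ is symmetric, so it is almost Gorenstein trivially. For $r \ge 1$, one must construct a short exact sequence $0 \to R \to K_R(-a) \to C \to 0$ with $C$ Ulrich, where $R = \kk[G_{n,n,r}]$. Writing $h_R(t) = p(t) - rt$ with $p(t) = \sum_i\binom{n-1}{i}^2 t^i$ symmetric, the reversal $h^*(t) = t^s h_R(1/t)$ equals $p(t) - rt^{s-1}$, and so $h^*(t) - h_R(t) = rt(1 - t^{s-2})$, which pins down the Hilbert series of the required cokernel $C$. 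The main obstacle is constructing $C$ explicitly from the combinatorics of the missing matching $M$, producing a graded $R$-module with the correct Hilbert series and satisfying the Ulrich condition. I expect the construction to exploit the canonical-module structure inherited from the Gorenstein case $r = 0$, realizing $C$ as a natural quotient built from the $r$ missing edges.
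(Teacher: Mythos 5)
Your Step 2 (necessity of $m=n$) is correct and is exactly the paper's argument: socle degree $s=\min\{m,n\}-1\ge 2$, leading coefficient $\binom{n-1}{m-1}$ (for $m\le n$), and \Cref{thm:almh_s=1} forces this to be $1$. The other two steps, however, each contain a genuine gap.

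In Step 1 you route the computation through K\'alm\'an's interior polynomial, but the entire content of the claim is the assertion that deleting the matching changes only the linear coefficient, and your justification (``the contributions \dots decouple in a controlled way and cancel'') is not an argument --- the set of hypertrees changes when edges are deleted, and it is precisely the effect of that change on the coefficients in degree $\ge 2$ that must be controlled. The paper instead works with Ehrhart counts via \Cref{prop:326644C5}: it identifies the discrepancy $kP_{K_{m,n}}\cap\ZZ^{m+n}\setminus\rbra{kP_{G_{m,n,r}}\cap\ZZ^{m+n}}$ as a disjoint union $A_1\cup\cdots\cup A_r$, where $A_i$ consists of the lattice points violating the facet inequality $H_{\{i\}}^+$ of $P_{G_{m,n,r}}$ (i.e.\ with $x_i+x_{m+i}\ge k+1$), puts each $A_i$ in bijection with $(k-1)$-element multisubsets of a set of size $m+n-1$ to get $\card{A_i}=\binom{k+m+n-3}{m+n-2}$, and checks by an exchange argument on the representations $a=\sum b_{i,j}(\eb_i+\eb_{m+j})$ that every remaining point of $kP_{K_{m,n}}$ does lie in $kP_{G_{m,n,r}}$. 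If you want to keep the interior-polynomial route you would need to supply the analogous combinatorial bookkeeping for hypertrees; as written the key step is asserted, not proved.

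In Step 3 you correctly reduce to $r\ge 1$, compute the Hilbert function that a cokernel $C$ would have to have, and then state that constructing such an Ulrich module is ``the main obstacle'' which you ``expect'' can be overcome. That is the whole difficulty of the sufficiency direction, and it is left open. The paper sidesteps the construction entirely by invoking the numerical criterion of \Cref{thm:alm}: $R$ is almost Gorenstein iff $\widetilde{e}(R)=\mu(\omega_R)-1$, where $\widetilde{e}(R)\ge\mu(\omega_R)-1$ always holds. From the $h$-vector one computes $\widetilde{e}(\kk[G_{n,n,r}])=r(n-3)$ (the symmetric part contributes $0$ and the defect $-rt$ contributes $r$ for each $j=1,\dots,s-2$), and \Cref{lem:canoG} gives the matching lower bound $\mu(\omega_{\kk[G_{n,n,r}]})\ge r(n-3)+1$ by exhibiting explicit interior lattice points $v_{i,j}$ of $S_{G_{n,n,r}}$ that cannot be written as (interior point)$+\rho(e)$, hence are minimal generators of the canonical module. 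Squeezing gives equality. I would recommend adopting this criterion rather than attempting to build $C$ by hand.
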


\bigskip

\subsection{Organization}

The structure of this paper is as follows.
In \Cref{sec:pre}, we prepare the required materials for the discussions later.
We recall definitions and notations associated with commutative algebras and edge rings.
In \Cref{sec:leading}, we discuss the pseudo-Gorensteinness of edge rings.
Before that, we recall some definitions and facts on graph theory.
We then provide the characterization of pseudo-Gorenstein edge rings for bipartite graphs and investigate the case of non-bipartite graphs.
In \Cref{sec:almost}, we introduce a new family of almost Gorenstein edge rings and compute their $h$-polynomials.
Moreover, we give an observation on the behavior of the $h$-polynomials of almost Gorenstein edge rings.

\bigskip

\subsection*{Acknowledgement}
The author would like to thank Akihiro Higashitani, Tam\'as K\'alm\'an, and Sora Miyashita for their helpful comments and advice on improving this paper.
The third author is partially supported by Grant-in-Aid for JSPS Fellows Grant JP22J20033.

\section{Preliminaries}\label{sec:pre}

\subsection{Cohen--Macaulay homogeneous domains and their $h$-polynomials}
Throughout this subsection, let $R = \bigoplus_{k\ge 0} R_k$ be a Cohen--Macaulay homogeneous domain of dimension $d$ over an algebraically closed field $R_0 = \kk$ with characteristic $0$ and let $(h_0,\ldots,h_s)$ be the $h$-vector.
First, we introduce some fundamental objects associated with homogeneous rings (consult, e.g., \cite{bruns1998cohen} for the introduction).
\begin{itemize}
    \item Let $\omega_R$ denote a canonical module of $R$ and let $a(R)$ denote the $a$-invariant of $R$, i.e., $a(R)=-\min\{j:(\omega_R)_j \neq 0\}$.
          When it is clear from context, we simply write $a$ instead of $a(R)$.
    \item For a graded $R$-module $M$, we use the following notation:
          \begin{itemize}
              \item Let $\mu(M)$ denote the number of minimal generators of $M$.
              \item Let $e(M)$ denote the multiplicity of $M$. Then the inequality $\mu(M) \leq e(M)$ always holds.
              \item Let $M(-\ell)$ denote the $R$-module whose grading is given by $M(-\ell)_k=M_{k-\ell}$ for any $k \in \ZZ$.
          \end{itemize}
    \item As mentioned in the introduction, we denote the $h$-polynomial (resp. the socle degree) of $R$ by $h(R;t)$ (resp. $s(R)$).
          We simply write $s$ instead of $s(R)$ as in the $a$-invariant.
          Note that $h_s=\dim_\kk (\omega_R)_{-a}$ and $d+a=s$ (see \cite[Section 4.4]{bruns1998cohen}).
\end{itemize}

Next, we recall the definitions and some properties of pseudo-Gorenstein rings and almost Gorenstein rings, respectively.

\begin{defi}[{\cite[Section~1]{ene2015pseudo}}]
    We call $R$ \textit{pseudo-Gorenstein} if $h_{s}=\dim_\kk (\omega_R)_{-a}=1$.
\end{defi}

\begin{defi}[{\cite[Definition 1.5]{goto2015almost}}]
    We call $R$ \textit{almost Gorenstein}
    if there exists an exact sequence of graded $R$-modules
    \begin{equation}\label{eq:exact}
        0 \to R \to \omega_R(-a) \to C \to 0
    \end{equation}
    with $\mu(C)=e(C)$.
\end{defi}
Under our assumptions on $R$, there always exists a degree-preserving injection from $R$ to $\omega_R(-a)$ (\cite[Proposition 2.2]{higashitani2016almost}).
Moreover, we see that $\mu(C)=\mu(\omega_R)-1$ and $e(C)=\sum_{j=0}^{s-1}\rbra{\rbra{h_s+\cdots+h_{s-j}}-\rbra{h_0+\cdots+h_j}}$ (\cite[Propositions~2.3 and 2.4]{higashitani2016almost}), especially, $\mu(C)$ and $e(C)$ do not depend on $C$.

Let $\widetilde{e}(R) \coloneqq \sum_{j=0}^{s-1}\rbra{\rbra{h_s+\cdots+h_{s-j}}-\rbra{h_0+\cdots+h_j}}$. Note that $\widetilde{e}(R)\ge \mu(\omega_R)-1$.

\begin{thm}[{\cite[Corollary~2.7]{higashitani2016almost}}]\label{thm:alm}
    Work with the same notation as above.
    Then $R$ is almost Gorenstein if and only if one has $\widetilde{e}(R)=\mu(\omega_R)-1$.
\end{thm}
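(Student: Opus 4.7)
The plan is to observe that the theorem is an immediate consequence of the three facts collected in the paragraph immediately preceding its statement: the existence of a degree-preserving injection $R \hookrightarrow \omega_R(-a)$, the formula $\mu(C) = \mu(\omega_R) - 1$, and the formula $e(C) = \widetilde{e}(R)$. The first guarantees that a short exact sequence of the form \eqref{eq:exact} always exists, while the second and third show that $\mu(C)$ and $e(C)$ depend only on $R$. Consequently, the condition $\mu(C)=e(C)$ in the definition of almost Gorensteinness collapses to the numerical identity $\mu(\omega_R)-1 = \widetilde{e}(R)$, proving both directions of the equivalence at once. The real work thus lies in justifying these three preliminary facts.

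For the existence of the injection, I would choose a homogeneous element of the minimal-degree piece $(\omega_R)_{-a}$, view it as a nonzero element $f$ of $(\omega_R(-a))_0$, and define the map $R \to \omega_R(-a)$ by $r \mapsto rf$. Because $R$ is a domain and $\omega_R(-a)$ is a torsion-free $R$-module of rank one, this degree-preserving map is automatically injective. For $\mu(C) = \mu(\omega_R) - 1$, I would invoke graded Nakayama's lemma: since $\omega_R(-a)$ is generated in nonnegative degrees and $\dim_\kk (\omega_R(-a))_0 = h_s$, the element $f$ above can be extended to a minimal homogeneous generating set of $\omega_R(-a)$, and passing to $C = \omega_R(-a)/Rf$ removes precisely this one generator.

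For $e(C) = \widetilde{e}(R)$, I would compute the Hilbert series. The Cohen--Macaulay duality identity $H_{\omega_R}(t) = (-1)^d H_R(1/t)$ yields $H_{\omega_R(-a)}(t) = h^*(R;t)/(1-t)^d$, where $h^*(R;t) = h_s + h_{s-1}t + \cdots + h_0 t^s$ is the reverse of $h(R;t)$. From the short exact sequence,
\[
    H_C(t) = \frac{h^*(R;t) - h(R;t)}{(1-t)^d},
\]
and since the numerator vanishes at $t=1$, it is divisible by $1-t$. A direct expansion of the partial sums of the coefficients of $h^*(R;t) - h(R;t)$ shows that
\[
    H_C(t) = \frac{1}{(1-t)^{d-1}}\sum_{j=0}^{s-1}\left[\rbra{h_s + \cdots + h_{s-j}} - \rbra{h_0 + \cdots + h_j}\right] t^j,
\]
so the multiplicity $e(C)$, the value of the numerator at $t=1$, equals $\widetilde{e}(R)$.

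The main obstacle is the last step: one must verify that $C$ has Krull dimension exactly $d-1$ so that the leading coefficient of $(1-t)^{d-1} H_C(t)$ at $t=1$ really computes the multiplicity of $C$, and one must accommodate the degenerate Gorenstein case, in which $h^*(R;t) = h(R;t)$ and both sides of the claimed equivalence vanish trivially. With this bookkeeping handled, no deeper ideas are required beyond graded duality and Nakayama's lemma.
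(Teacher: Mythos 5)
The paper does not actually prove this statement---it is quoted from \cite[Corollary~2.7]{higashitani2016almost}, with the three supporting facts (the degree-preserving injection $R \hookrightarrow \omega_R(-a)$, the identity $\mu(C)=\mu(\omega_R)-1$, and $e(C)=\widetilde{e}(R)$, all independent of the choice of $C$) themselves cited from Propositions~2.2--2.4 of that reference in the paragraph preceding the theorem. Your reduction of the equivalence to exactly these three facts, together with your reconstructions of their proofs via torsion-freeness of $\omega_R$ over the domain $R$, graded Nakayama, and the duality $H_{\omega_R}(t)=(-1)^d H_R(1/t)$, is correct and matches the intended derivation.
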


\begin{thm}[{\cite[Theorem 4.7]{higashitani2016almost}}]\label{thm:almh_s=1}
    If $R$ is almost Gorenstein and $s(R) \ge 2$, then $h_{s(R)}=1$, that is, $R$ is pseudo-Gorenstein.
\end{thm}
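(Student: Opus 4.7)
The plan is to apply the characterization in \Cref{thm:alm}---namely that $R$ is almost Gorenstein if and only if $\widetilde{e}(R) = \mu(\omega_R) - 1$---and to extract from it the single equality $h_{s} = 1$.

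First, I would expand $\widetilde{e}(R) = \sum_{j=0}^{s-1} c_j$, where $c_j := (h_s + h_{s-1} + \cdots + h_{s-j}) - (h_0 + h_1 + \cdots + h_j)$. A telescoping computation yields $c_0 = c_{s-1} = h_s - 1$. Moreover, the sequence $(c_0, c_1, \ldots, c_{s-1})$ is the $h$-vector of the Cohen--Macaulay cokernel $C$ in the defining exact sequence $0 \to R \to \omega_R(-a) \to C \to 0$ (the case in which $R$ is itself Gorenstein is handled directly by the symmetry of its $h$-vector, which gives $h_s = h_0 = 1$), so each $c_j \geq 0$. Since $s \geq 2$, the summands $c_0$ and $c_{s-1}$ are distinct, yielding $\widetilde{e}(R) \geq 2(h_s - 1)$ and hence $\mu(\omega_R) \geq 2h_s - 1$ by almost Gorensteinness.

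To convert this lower bound into $h_s = 1$, I would reduce to the Artinian setting. For a generic linear system of parameters $(l_1, \ldots, l_d)$ regular on both $R$ and $\omega_R$, the quotient $A := R/(l_1, \ldots, l_d)$ is Artinian with the same $h$-vector as $R$ and with $a$-invariant equal to $s$. Quotienting the defining exact sequence yields a graded map $A \to \omega_A(-s)$; if this map is injective, then comparing graded dimensions in degree $s$ gives
\[
    h_s = \dim_{\kk} A_s \leq \dim_{\kk} (\omega_A(-s))_s = \dim_{\kk} (\omega_A)_0 = h_0 = 1,
\]
forcing $h_s = 1$.

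The main obstacle is justifying the injectivity of this descent: one must show that, for a generic choice of $\eta \in (\omega_R)_{-a}$ and generic linear parameters, the image $\bar\eta \in (\omega_A)_{-s}$ has trivial annihilator in $A$. This is a genericity statement that can be arranged because $\kk$ is algebraically closed and hence infinite, so the ``bad'' locus is a proper Zariski-closed subset of the parameter space. The hypothesis $s \geq 2$ enters decisively at precisely this step: in the $s = 1$ case, the descent genuinely fails, in accordance with the existence of level almost Gorenstein rings with $h$-vector $(1, h_1)$ and $h_1 \geq 2$, for which the conclusion $h_s = 1$ does not hold.
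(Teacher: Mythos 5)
The paper does not prove this statement at all---it is quoted verbatim from Higashitani \cite[Theorem~4.7]{higashitani2016almost}---so there is no internal proof to compare against; I am judging your argument on its own. Its first half is fine: $c_0=c_{s-1}=h_s-1$, the $c_j$ form the $h$-vector of the $(d-1)$-dimensional Cohen--Macaulay module $C$ and are therefore nonnegative, and with $s\ge 2$ this gives $\widetilde{e}(R)\ge 2(h_s-1)$, hence $\mu(\omega_R)\ge 2h_s-1$. But that inequality points the wrong way (you would need an \emph{upper} bound on $\mu(\omega_R)$ in terms of $h_s$ to conclude anything), and your proof never returns to it; the entire burden falls on the descent step.

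That step is where the proof breaks, and the failure is not a technical genericity issue --- it is circular. Write $A=R/(l_1,\dots,l_d)$ and let $\bar\eta\in(\omega_A)_{-s}$ induce the map $A\to\omega_A(-s)$. Under the identification $\omega_A\cong\operatorname{Hom}_\kk(A,\kk)$, for $a\in A_s$ the element $a\bar\eta\in(\omega_A)_0\cong\kk$ is just the scalar $\bar\eta(a)$, so injectivity of your map \emph{in degree $s$} says exactly that the linear functional $\bar\eta|_{A_s}\colon A_s\to\kk$ is injective, i.e.\ that $h_s=\dim_\kk A_s\le 1$. You are assuming the conclusion. Genericity cannot rescue this: the obstruction is a dimension count, not a proper closed ``bad locus.'' Worse, full injectivity of $A\to\omega_A(-s)$ would give a rank-one free submodule $A\bar\eta\subseteq\omega_A$, and since $\ell(\omega_A)=\ell(A)$ this forces $\omega_A\cong A(s)$, i.e.\ $A$ (hence $R$) Gorenstein --- so for every non-Gorenstein almost Gorenstein ring, \emph{every} $\bar\eta$ has nonzero annihilator. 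Note also that your stated mechanism for why $s\ge 2$ is ``decisive'' never actually appears in the argument; in Higashitani's proof the hypothesis $s\ge2$ enters through the Ulrich structure of $C$ (the equality $\mu(C)=e(C)$ forces the degreewise count of minimal generators of $C$ to equal $(c_0,\dots,c_{s-1})$, and one exploits the generators in the two distinct degrees $0$ and $s-1$), not through any injectivity of a reduced comparison map. As it stands the proposal does not prove the theorem.
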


\bigskip

\subsection{Edge rings and edge polytopes}
Throughout this paper, all graphs are finite and have no loops and no multiple edges.
We recall the definition and some properties of edge rings.
We also recall edge polytopes, which are lattice polytopes arising from graphs.
We can regard edge rings as affine semigroup rings associated with edge polytopes.
See, e.g., \cite[Section 10]{villarreal2001monomial} or \cite[Section 5]{herzog2018binomial} for the introduction to edge rings.

For a graph $G$ on the vertex set $V(G)=[d]$ with the edge set $E(G)$, we consider the morphism $\psi_G$ of $\kk$-algebras:
\[
    \psi_G : \kk[x_{\{i,j\}} : \{i,j\}\in E(G)] \to \kk[t_1,\ldots,t_d], \text{ induced by } \psi_G(x_{\{i,j\}})=t_it_j.
\]
Then we denote the image (resp. the kernel) of $\psi_G$ by $\kk[G]$ (resp. $I_G$) and call $\kk[G]$ the \textit{edge ring} of $G$.
The edge ring $\kk[G]$ is a homogeneous domain by setting $\deg(t_it_j)=1$ for each $\set{i,j}\in E(G)$.

Given an edge $e=\{i,j\} \in E(G)$, let $\rho(e) \coloneqq \eb_i+\eb_j$, where $\eb_i$ denotes the $i$-th unit vector of $\RR^d$ for $i=1,\ldots,d$.
We define the convex polytope associated to $G$ as follows:
\[
    P_G \coloneqq \conv(\{\rho(e) : e \in E(G)\}) \subset \RR^d.
\]
We call $P_G$ the \textit{edge polytope} of $G$.

We can regard $\kk[G]$ as an affine semigroup ring as follows:
Let $\calA_G=\{\rho(e) : e \in E(G)\}$ and let $S_G$ be the affine semigroup generated by $\calA_G$, that is, $S_G=\ZZ_{\ge 0}\calA_G$. Then we can easily see that $\kk[G]$ is isomorphic to the affine semigroup ring of $S_G$.

Let $b(G)$ be the number of bipartite connected components of $G$, then we have $\dim P_G=d-b(G)-1$ (see \cite[Proposition~10.4.1]{villarreal2001monomial}).
This implies that the Krull dimension of $\kk[G]$ is equal to $d-b(G)$.

\begin{thm}[cf. {\cite{ohsugi1998normal,simis1998integral}}]
    Let $G$ be a graph. Then the following are equivalent:
    \begin{enumerate}
        \item $\kk[G]$ is normal;
        \item $S_G=\RR_{\ge 0}\calA_G\cap \ZZ\calA_G$;
        \item $\dim_\kk \kk[G]_k =\card{kP_G\cap \ZZ^d}$ for any $k\in \ZZ_{\ge 0}$.
        \item each connected component of $G$ satisfies the {\upshape odd cycle condition}, i.e., for each pair of odd cycles $C$ and $C'$ with no common vertex, there is an edge $\{v,v'\}$ with $v \in V(C)$ and $v' \in V(C')$
    \end{enumerate}
\end{thm}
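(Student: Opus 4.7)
The plan is to establish the four equivalences in three stages, corresponding to a general commutative-algebra fact, a Hilbert-function bookkeeping, and the main combinatorial content. The first, (1) $\Leftrightarrow$ (2), contains no graph-theoretic input: for any affine semigroup $S$ the ring $\kk[S]$ is normal in its field of fractions if and only if $S$ is saturated, i.e.\ $S = \RR_{\geq 0} S \cap \ZZ S$ (a standard result, e.g.\ in Bruns--Herzog). Applying this to $\kk[G] \cong \kk[S_G]$ yields the equivalence immediately.

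For (2) $\Leftrightarrow$ (3), the key observation is that each degree-$k$ monomial of $\kk[G]$ corresponds to a unique element of $S_G$ on the affine slice $H_k \coloneqq \setcond{v \in \RR^d}{\sum_i v_i = 2k}$, and that this slice meets $\RR_{\geq 0}\calA_G$ in precisely $kP_G$. Thus $\dim_\kk \kk[G]_k = \card{S_G \cap H_k}$, while $kP_G \cap \ZZ^d = \RR_{\geq 0}\calA_G \cap \ZZ^d \cap H_k$. After a brief check that $\RR_{\geq 0}\calA_G \cap \ZZ^d = \RR_{\geq 0}\calA_G \cap \ZZ\calA_G$ (automatic when every component has an odd cycle, and a parity computation in the bipartite case), one sees that condition (2) is equivalent to the agreement of these two counts for every $k \ge 0$, which is (3).

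The heart of the theorem is (2) $\Leftrightarrow$ (4). For $\neg(4) \Rightarrow \neg(2)$, I would take two vertex-disjoint odd cycles $C, C'$ with no edge between them and set $v = \tfrac{1}{2}\sum_{e \in E(C) \cup E(C')} \rho(e)$; since each vertex of $C \cup C'$ is incident to exactly two edges in that subgraph, $v \in \ZZ^d$, and in fact $v \in \ZZ\calA_G \cap \RR_{\geq 0}\calA_G$, yet any expression $v = \sum n_e \rho(e)$ with $n_e \in \ZZ_{\geq 0}$ would force the supporting edges to constitute a perfect matching on $V(C) \sqcup V(C')$, which is impossible since both $\card{V(C)}$ and $\card{V(C')}$ are odd. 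For the reverse direction, given $w \in \RR_{\geq 0}\calA_G \cap \ZZ\calA_G$, the strategy is to begin from a rational expression $w = \sum \lambda_e \rho(e)$ with $\lambda_e \ge 0$ and apply the odd cycle condition to perform local exchanges that progressively lower the common denominator: whenever two odd cycles carry fractional weight, the connecting edge guaranteed by (4), together with the two cycles, supports an integral rewriting move that preserves $w$. Making this move precise and showing that the procedure terminates is the main obstacle, and this is exactly the combinatorial core of the Ohsugi--Hibi and Simis--Vasconcelos--Villarreal theorems.
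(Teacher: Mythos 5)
First, a point of calibration: the paper does not prove this theorem at all --- it is quoted as background from Ohsugi--Hibi and Simis--Vasconcelos--Villarreal --- so there is no in-paper argument to compare yours against, and I can only assess your sketch on its own terms. Your stage (1) $\Leftrightarrow$ (2) is the standard saturation criterion for affine semigroup rings and is fine, and your argument for $\neg$(4) $\Rightarrow$ $\neg$(2) is correct and essentially complete: the vector $v=\tfrac{1}{2}\sum_{e\in E(C)\cup E(C')}\rho(e)$ is the indicator vector of $V(C)\cup V(C')$, lies in $\RR_{\ge0}\calA_G\cap\ZZ\calA_G$ (its coordinate sum is even and the relevant component is non-bipartite), and cannot lie in $S_G$ because any integral representation would produce a perfect matching of the odd-cardinality set $V(C)$.

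However, there are two genuine gaps. The first is in (2) $\Leftrightarrow$ (3): the identity $\RR_{\ge0}\calA_G\cap\ZZ^d=\RR_{\ge0}\calA_G\cap\ZZ\calA_G$ that you call ``automatic when every component has an odd cycle'' is false in precisely that case --- for $G=K_3$ the point $(1,1,1)$ lies in the cone and in $\ZZ^3$ but has odd coordinate sum, hence is not in $\ZZ\calA_G$. You have the two cases reversed: in the bipartite case the equality is automatic (every point of the cone satisfies $\sum_{i\in V_1}x_i=\sum_{j\in V_2}x_j$, and $\ZZ\calA_G$ is exactly the integer points of that hyperplane), whereas in the non-bipartite case one must restrict to the slices $\sum_i x_i=2k$, where integrality forces even coordinate sum --- and even then the argument needs $G$ connected: for two disjoint triangles, $(1,\ldots,1)$ lies on the slice $k=3$, in the cone and in $\ZZ^6$, but not in $\ZZ\calA_G$, so that graph satisfies (1), (2) and (4) but not (3). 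Any correct treatment must therefore either assume connectivity or handle components separately at this step. The second and more serious gap is that the implication (4) $\Rightarrow$ (2), which you yourself identify as the combinatorial core, is not actually proved: ``local exchanges that progressively lower the common denominator'' is a plausible strategy, but you give no precise exchange move, no quantity that decreases, and no termination argument. Since this direction carries the entire content of the Ohsugi--Hibi and Simis--Vasconcelos--Villarreal theorems, the proposal as it stands is an outline rather than a proof.
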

For a lattice polytope $P\subset \RR^d$ and $k\in \ZZ_{\ge 0}$, $L_P(k) \coloneqq \card{kP\cap \ZZ^d}$ is called the \textit{Ehrhart polynomial} of $P$.
If $\kk[G]$ is normal, we can compute the Ehrhart polynomial of $P_G$ using the $h$-vector of $\kk[G]$ as follows:
\begin{prop}[{see \cite[Lemma~3.14]{beck2007computing}}]
    \label{prop:326644C5}
    Let $G$ be a graph with $\dim P_G=\delta$ and let $(h_0,\ldots,h_s)$ be the $h$-vector of $\kk[G]$.
    If $\kk[G]$ is normal, then the Ehrhart polynomial $L_{P_G}(k)$ of $P_G$ can be described as follows:
    \[
        L_{P_G}(k)=h_0\binom{k+\delta}{\delta}+h_1\binom{k+\delta-1}{\delta}+\cdots +h_s\binom{k+\delta-s}{\delta}.
    \]
\end{prop}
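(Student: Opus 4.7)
The plan is to translate the statement into an equality of formal power series and then extract coefficients. Under the normality assumption, item (3) of the preceding theorem gives the key identity
\[
    \dim_\kk \kk[G]_k \;=\; \card{kP_G \cap \ZZ^d} \;=\; L_{P_G}(k)
    \qquad \text{for every } k \in \ZZ_{\ge 0}.
\]
Hence the Hilbert series of $\kk[G]$ coincides with the Ehrhart series of $P_G$:
\[
    \sum_{k \ge 0} L_{P_G}(k)\, t^k \;=\; \sum_{k \ge 0} (\dim_\kk \kk[G]_k)\, t^k.
\]

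Next I would identify the Krull dimension appearing in the denominator of the Hilbert series. Since $\dim P_G = \delta$, we saw earlier that $\dim \kk[G] = d - b(G) = \delta + 1$, so the standard form of the Hilbert series of a Cohen--Macaulay homogeneous domain gives
\[
    \sum_{k \ge 0} L_{P_G}(k)\, t^k \;=\; \frac{h_0 + h_1 t + \cdots + h_s t^s}{(1-t)^{\delta+1}}.
\]

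To finish, I would expand the denominator using the classical identity
\[
    \frac{1}{(1-t)^{\delta+1}} \;=\; \sum_{j \ge 0} \binom{j + \delta}{\delta} t^j,
\]
multiply by the numerator, and read off the coefficient of $t^k$. This yields
\[
    L_{P_G}(k) \;=\; \sum_{i=0}^{s} h_i \binom{k - i + \delta}{\delta},
\]
which is exactly the claimed formula. Here I would interpret $\binom{k - i + \delta}{\delta}$ as the polynomial $(k-i+\delta)(k-i+\delta-1)\cdots(k-i+1)/\delta!$ in $k$, so that the right-hand side is a genuine polynomial; both sides then agree for all sufficiently large $k$ (in fact for all $k \ge 0$ since the two power series match coefficientwise), and hence as polynomials.

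There is no real obstacle here; the argument is just a bookkeeping exercise. The only point that requires a brief justification is the equality $\dim \kk[G] = \delta + 1$, which is already recorded in the excerpt, and the fact that the Cohen--Macaulay assumption needed to write the Hilbert series in the displayed rational form is built into the running hypotheses (normal affine semigroup rings arising from lattice polytopes are Cohen--Macaulay by Hochster's theorem).
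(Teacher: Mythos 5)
Your argument is correct and is exactly the standard derivation of this fact (Hilbert series $=$ Ehrhart series under normality, then expand $(1-t)^{-(\delta+1)}$ and compare coefficients); the paper itself gives no proof and simply cites the literature for it. The one point worth being careful about, which you do address, is interpreting $\binom{k-i+\delta}{\delta}$ as a polynomial in $k$ so that the coefficient extraction is valid for all $k$, not just $k\ge s$.
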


Moreover, if $\kk[G]$ is normal, then $\kk[G]$ is Cohen--Macaulay and the ideal generated by monomials corresponding to the elements in $\int(S_G)$, where $\int(S_G)$ denotes the set of elements of $S_G$ in the relative interior of the cone generated by $S_G$, is the canonical module of $\kk[G]$ (see \cite[Theorems~6.10 and 6.31]{bruns2009polytopes}).
In particular, let $\ell_G \coloneqq \min\set{n : \int(nP_G)\cap \ZZ^d\neq \emptyset}$, where $\int(P)$ denotes the relative interior of a polytope $P$, then we have $\ell_G=-a(\kk[G])$.

To observe $\int(S_G)$, we give a description of the cone $\RR_{\ge 0}\calA_G$.
To this end, we recall some terminologies and notations.
We say that a subset $T$ of $V(G)$ is an \textit{independent set} of $G$ if $\{v,w\}\notin E(G)$ for any $v,w\in T$.
Note that each singleton is regarded as an independent set.
We denote the set of non-empty independent sets of $G$ by $\calT_G$.

For $i \in [d]$ and $T \in \calT_G$, let
\begin{align*}
     & H_i \coloneqq \{(x_1,\ldots,x_d) \in \RR^d : x_i= 0\},
     & H_T \coloneqq \set{(x_1,\ldots,x_d) \in \RR^d : \sum_{j \in N_G(T)}x_j - \sum_{i \in T}x_i = 0},     \\
     & H_i^+ \coloneqq \{(x_1,\ldots,x_d) \in \RR^d : x_i\ge 0\},
     & H_T^+ \coloneqq \set{(x_1,\ldots,x_d) \in \RR^d : \sum_{j \in N_G(T)}x_j - \sum_{i \in T}x_i \ge 0}, \\
     & H_i^{>} \coloneqq \{(x_1,\ldots,x_d) \in \RR^d : x_i > 0\},
     & H_T^{>} \coloneqq \set{(x_1,\ldots,x_d) \in \RR^d : \sum_{j \in N_G(T)}x_j - \sum_{i \in T}x_i > 0},
\end{align*}
where $N_G(T) \coloneqq \set{v\in [d] : \set{v,w}\in E(G) \text{ for some }w\in T}$.

The following terminologies are used in \cite{ohsugi1998normal}: Suppose that $G$ is connected.
\begin{itemize}
    \item For a subset $W \subset V(G)$, let $G\setminus W$ denote the induced subgraph with respect to $V(G)\setminus W$ (for a vertex $v$, we denote by $G \setminus v$ instead of $G \setminus \{v\}$).
    \item We call a vertex $v$ of $G$ \textit{ordinary} if $G \setminus v$ is connected.
    \item Given an independent set $T \subset V(G)$, let $B(T)$ denote the bipartite graph on $T \cup N_G(T)$ with the edge set $\{\{v,w\}\in E(G) : v \in T, w \in N_G(T)\}$.
    \item When $G$ is a bipartite graph with the partition $V(G)=V_1\sqcup V_2$, a non-empty set $T \subset V_1$ is said to be an \textit{acceptable set} if the following conditions are satisfied:
          \begin{itemize}
              \item $B(T)$ is connected;
              \item $G \setminus V(B(T))$ is a connected graph with at least one edge.
          \end{itemize}
\end{itemize}

\begin{thm}[{\cite[Theorem~1.7]{ohsugi1998normal}, see also \cite[Section~10.7]{villarreal2001monomial}}]\label{facet}
    Let $G$ be a connected graph.
    Then the cone $\RR_{\ge 0}\calA_G$ has the following representation:
    \[
        \RR_{\ge 0}\calA_G=\rbra{\bigcap_{i\in [d]}H^+_i}\cap \rbra{\bigcap_{T\in \calT_G}H^+_T}.
    \]
    Moreover, if $G$ is bipartite, each facet of $P_G$ is defined by a supporting hyperplane $H_i$ for some ordinary vertex $i$ or $H_T$ for some acceptable set $T$.
\end{thm}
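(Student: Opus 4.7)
The plan is to establish the two assertions separately: first the cone description, then the characterization of facets in the bipartite setting.

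For the cone description, the inclusion $\RR_{\ge 0}\calA_G \subseteq \bigcap_i H_i^+ \cap \bigcap_{T \in \calT_G} H_T^+$ is immediate: on each generator $\rho(e) = \eb_i + \eb_j$ all coordinates are non-negative, and since $T$ is independent at most one endpoint of $e$ lies in $T$; if $i \in T$ then $j \in N_G(T)$ and the $H_T$-form equals $-1 + 1 = 0$, while otherwise it is non-negative. For the reverse inclusion, by Farkas' lemma it suffices to show that whenever $c \in \RR^d$ satisfies $c_i + c_j \ge 0$ for every $\{i,j\} \in E(G)$, one also has $\langle c, x \rangle \ge 0$ for every $x$ in the intersection. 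Writing $c = c^+ - c^-$ with $c^{\pm}_i := \max(\pm c_i, 0)$ and putting $T := \mathrm{supp}(c^-)$, the case $T = \emptyset$ reduces to $c \ge 0$; otherwise the edge condition forces $T \in \calT_G$, $N_G(T) \subseteq \mathrm{supp}(c^+)$, and $c_j^+ \ge c_i^-$ whenever $i \in T$, $\{i,j\} \in E(G)$.

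The main obstacle is upgrading the unweighted inequality $\sum_{j \in N_G(T)} x_j \ge \sum_{i \in T} x_i$ (supplied by $x \in H_T^+$) to the weighted form $\sum_{j \in N_G(T)} c_j^+ x_j \ge \sum_{i \in T} c_i^- x_i$. I would handle this with a level-set/Abel-summation argument: let $\lambda_1 > \cdots > \lambda_p > 0$ be the distinct positive values of $c^-$, set $\lambda_{p+1} := 0$, and $T^{(k)} := \{i : c_i^- \ge \lambda_k\}$. Each $T^{(k)}$ is a non-empty independent set, so $x \in H_{T^{(k)}}^+$ yields $\sum_{j \in N_G(T^{(k)})} x_j \ge \sum_{i \in T^{(k)}} x_i$, while the edge condition forces $c_j^+ \ge \lambda_k$ for every $j \in N_G(T^{(k)})$. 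Telescoping $\sum_i c_i^- x_i = \sum_k (\lambda_k - \lambda_{k+1}) \sum_{i \in T^{(k)}} x_i$ and bounding it termwise by $\sum_k (\lambda_k - \lambda_{k+1}) \sum_{j \in N_G(T^{(k)})} x_j \le \sum_{j \in N_G(T)} c_j^+ x_j$ delivers the required inequality; combining with $c_i^+ = 0$ on $T$ and $N_G(T) \cap T = \emptyset$ then gives $\langle c, x \rangle \ge 0$.

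For the facet characterization when $G$ is bipartite with parts $V_1 \sqcup V_2$, the cone $\RR_{\ge 0}\calA_G$ lies in the hyperplane $\sum_{i \in V_1} x_i = \sum_{j \in V_2} x_j$, hence has dimension $d - 1$. A valid inequality from the cone description defines a facet precisely when the generators $\rho(e)$ contained in its bounding hyperplane span a rank-$(d-2)$ subcone. For $H_i$ those generators correspond to the edges of $G \setminus i$, which reach rank $d - 2$ exactly when $G \setminus i$ is connected, i.e.\ when $i$ is ordinary. For $H_T$ (after interchanging the two parts if necessary so that $T \subseteq V_1$) the tight edges split into those of $B(T)$ and those of $G \setminus V(B(T))$, which live in vertex-disjoint coordinate subspaces; a direct rank count shows that the total reaches $d - 2$ exactly when $B(T)$ is connected and $G \setminus V(B(T))$ is connected with at least one edge, precisely the definition of an acceptable set. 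The most delicate point is showing that no other independent $T$ provides a facet: either the rank count strictly drops, or the inequality is a non-negative combination of ones already identified from ordinary vertices and acceptable sets.
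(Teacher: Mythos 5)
The paper itself gives no proof of \Cref{facet}; it is imported verbatim from Ohsugi--Hibi and Villarreal. So your argument can only be judged on its own merits. Your proof of the first assertion is correct and complete in all essentials: the easy inclusion is right, and the duality argument for the reverse inclusion works --- the dual cone of $\RR_{\ge 0}\calA_G$ is exactly $\setcond{c \in \RR^d}{c_i + c_j \ge 0 \text{ for all } \set{i,j} \in E(G)}$, the support $T$ of $c^-$ is forced to be independent, and your superlevel-set telescoping legitimately reduces the weighted inequality $\sum_{j\in N_G(T)} c_j^+ x_j \ge \sum_{i\in T} c_i^- x_i$ to the unweighted inequalities $x \in H_{T^{(k)}}^+$, using that $c_j^+ \ge \lambda_{k}$ whenever $j \in N_G(T^{(k)})$. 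I checked the coefficient bookkeeping and it closes.

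The second assertion is where the genuine gaps are, and they are precisely the content of the ``acceptable set'' formulation. (a) Your rank criterion for $H_T$ with $T \subseteq V_1$ is asserted as an ``exactly when'', but it is false as stated: if $B(T)$ is connected and $G\setminus V(B(T))$ is a single vertex $v$ (so it has no edge and $T$ is \emph{not} acceptable), the tight generators are exactly $\set{\rho(e) : e \in E(G\setminus v)}$ and still have rank $d-2$, so $H_T$ does cut out a facet --- it coincides with the facet cut out by $H_v$ for the ordinary vertex $v$, which is why the theorem survives, but your argument read literally misclassifies this case. (b) For independent sets meeting both parts you promise ``a non-negative combination'' but never produce it; the clean fix is the identity that the linear form of $H_T$ equals the sum of those of $H_{T\cap V_1}$ and $H_{T\cap V_2}$ (because $N_G(T\cap V_1)\subseteq V_2$ and $N_G(T\cap V_2)\subseteq V_1$ are disjoint), so $H_T\cap \RR_{\ge 0}\calA_G$ is an intersection of two faces and can be a facet only if it equals one of them --- but this has to be stated and used. (c) For $T \subseteq V_2$, ``interchanging the two parts'' does not suffice, because acceptability is defined relative to the fixed part $V_1$, and that is the version the paper actually invokes later (in the proof of \Cref{lem:interior} only acceptable $T \subsetneq V_1$ are checked). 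One must show that a facet cut out by $H_T$ with $T\subseteq V_2$ is also cut out by $H_{T'}$ with $T' = V_1\setminus N_G(T)$ (or by some $H_i$), which requires comparing the two sets of tight generators and verifying that $H_{T'}$ is a proper face; none of this appears in your sketch.
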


For a subset $F\subset E(G)$, we set $v_F \coloneqq \sum_{e\in F}\rho(e)$.
The following lemma is obvious but important to our results:
\begin{lem}\label{lem:indepe}
    Let $G$ be a graph.
    For $F\subset E(G)$ and $T\in \calT_G$, we have $v_F \in H_T^>$ if there exists $f\in F$ with $f\in E(G)\setminus E(B(T))$ and $f\cap N_G(T)\neq \emptyset$.
\end{lem}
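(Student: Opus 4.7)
The plan is to evaluate the linear functional $L(x) \coloneqq \sum_{j \in N_G(T)} x_j - \sum_{i \in T} x_i$ defining the hyperplane $H_T$ directly on $v_F = \sum_{e \in F} \rho(e)$ and show that the result is strictly positive. Since $v_F$ is a non-negative sum of the vectors $\rho(e)$, it suffices to analyze $L(\rho(e))$ edge-by-edge.

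First, I would record the structural observation that since $T$ is an independent set, $T \cap N_G(T) = \emptyset$: any $v \in T \cap N_G(T)$ would be adjacent to some $w \in T$, contradicting independence. Consequently, for any vertex $k \in [d]$, the evaluation $L(\eb_k)$ takes the unambiguous value $+1$ if $k \in N_G(T)$, $-1$ if $k \in T$, and $0$ otherwise.

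Next, I would perform the case analysis on a single edge $e = \{i, j\} \in E(G)$, using the fact that any edge incident to a vertex of $T$ lies in $E(B(T))$ (because its other endpoint automatically belongs to $N_G(T)$). This gives:
\begin{itemize}
    \item if $e \in E(B(T))$, then $L(\rho(e)) = 1 + (-1) = 0$;
    \item if $e \notin E(B(T))$, then both endpoints of $e$ lie in $V(G) \setminus T$, so $L(\rho(e)) \in \{0, 1, 2\}$, and $L(\rho(e)) > 0$ precisely when $e \cap N_G(T) \neq \emptyset$.
\end{itemize}
In every case $L(\rho(e)) \ge 0$.

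Finally, summing over $F$ yields $L(v_F) = \sum_{e \in F} L(\rho(e)) \ge L(\rho(f))$, and the hypothesis on $f$ ensures $L(\rho(f)) \ge 1$, whence $v_F \in H_T^{>}$. The argument is essentially a bookkeeping exercise, so no real obstacle arises; the only point requiring care is justifying that an edge meeting $T$ is forced into $E(B(T))$, which is what allows the $-1$ contributions to be paired off against $+1$ contributions and keeps every $L(\rho(e))$ non-negative.
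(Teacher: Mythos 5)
Your proof is correct; the paper omits a proof entirely, declaring the lemma ``obvious,'' and your edge-by-edge evaluation of the functional $\sum_{j \in N_G(T)} x_j - \sum_{i \in T} x_i$ is exactly the routine verification being left to the reader. The two points you single out --- that $T \cap N_G(T) = \emptyset$ by independence, and that any edge meeting $T$ necessarily lies in $E(B(T))$ so every $-1$ is paired with a $+1$ --- are precisely what makes each term non-negative and the term for $f$ strictly positive.
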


A connected graph $G$ is said to be \textit{$2$-connected} if all vertices of $G$ are ordinary.
A \textit{block} is a maximal $2$-connected component of $G$.

\begin{prop}[{\cite[Proposition 10.1.48]{villarreal2001monomial}}]\label{product}
    Let $G$ be a bipartite graph and let $B_1,\ldots, B_n$ be their blocks.
    Then we have $\kk[G]\cong \kk[B_1]\otimes \cdots \otimes \kk[B_n]$.
\end{prop}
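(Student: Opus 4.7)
The plan is to construct a natural surjection from $\kk[B_1]\otimes\cdots\otimes\kk[B_n]$ onto $\kk[G]$ and then force injectivity by matching Krull dimensions. Without loss of generality I would assume $G$ is connected, since both sides of the claimed isomorphism respect connected components. The obvious candidate is the multiplication map
\[
    \mu \colon \kk[B_1]\otimes\cdots\otimes\kk[B_n] \to \kk[G],
    \quad f_1\otimes\cdots\otimes f_n \mapsto f_1 f_2 \cdots f_n,
\]
where each $\kk[B_k]$ is regarded inside the ambient polynomial ring via $V(B_k)\subset V(G)$. Surjectivity is immediate: every edge of $G$ belongs to a unique block, so every generator $t_i t_j$ of $\kk[G]$ is already the image of a generator of some $\kk[B_k]$.

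Next I would compare Krull dimensions. A standard block-cut-tree count (each edge of that tree being a block/cut-vertex incidence) yields $\sum_k \card{V(B_k)} = d + n - 1$. Each $B_k$ is connected bipartite, so $\dim \kk[B_k] = \card{V(B_k)} - 1$ by the formula $\dim \kk[H] = \card{V(H)} - b(H)$ recorded earlier in \Cref{sec:pre}. Summing gives $\sum_k \dim \kk[B_k] = d - 1 = \dim \kk[G]$. Since the standing hypothesis that $\kk$ is algebraically closed makes the tensor product of finitely generated $\kk$-domains again a $\kk$-domain with Krull dimensions adding, the source of $\mu$ is itself a $\kk$-domain of Krull dimension $d - 1$. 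A surjection between two finitely generated $\kk$-domains of the same Krull dimension is automatically injective (a nonzero kernel would strictly drop the Krull dimension of the quotient), so $\mu$ is the desired isomorphism.

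The main thing to watch is the bookkeeping that guarantees both endpoints of the dimension comparison are correct; the one nontrivial input is that the tensor product really is a domain, which genuinely uses algebraic closure of $\kk$ (to avoid $\mathbb{C}\otimes_{\RR}\mathbb{C}$-type pathologies). If that route turns out awkward, a natural backup is to argue at the level of toric ideals: for bipartite $G$ the ideal $I_G$ is generated by binomials arising from the cycles of $G$, and every simple cycle is confined to a single block (since it cannot traverse a cut vertex twice), so that $I_G = I_{B_1}+\cdots+I_{B_n}$ inside $\kk[x_e : e \in E(G)] = \bigotimes_k \kk[x_e : e \in E(B_k)]$, and the tensor factorisation follows by passing to quotients.
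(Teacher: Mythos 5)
The paper does not prove this statement at all --- it is quoted verbatim from Villarreal's book, so there is no internal proof to compare against; I can only assess your argument on its own merits, and it is correct. Your main route (surjectivity of the multiplication map because every edge lies in a unique block, the block--cut-tree count $\sum_k \card{V(B_k)} = d + n - 1$, the formula $\dim \kk[B_k] = \card{V(B_k)} - 1$ for connected bipartite blocks, and injectivity from equality of Krull dimensions of domains) is sound; the two points that genuinely need care are exactly the ones you flag, namely that the tensor product of finitely generated $\kk$-domains is again a domain (using that $\kk$ is algebraically closed, which is a standing hypothesis of the paper) and that the kernel of a surjection onto a domain is prime, so a nonzero kernel would force $\dim(A/\ker\mu) < \dim A$ by height additivity in a finitely generated domain. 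It is worth noting that the bipartite hypothesis enters only through the dimension count (a non-bipartite block has $\dim \kk[B_k] = \card{V(B_k)}$, and the count fails --- e.g.\ two triangles sharing a vertex), so your proof correctly isolates where bipartiteness is used. Your backup route via $I_G = I_{B_1} + \cdots + I_{B_n}$ is essentially the textbook argument: for bipartite $G$ the toric ideal is generated by binomials of even cycles, every cycle is $2$-connected and hence confined to a single block, and the edge sets of the blocks partition $E(G)$, so the quotient splits as a tensor product. That route is more elementary (no algebraic closure needed) and is closer to the cited source; your main route buys a shorter argument at the cost of the domain/dimension machinery.
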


\bigskip

\section{Pseudo-Gorenstein edge rings}\label{sec:leading}
In this section, we study the pseudo-Gorensteinness of edge rings.

\subsection{Preliminary on graph theory}
Before that, we need some notions and notations on (directed) graphs.
Let $G$ be a $2$-connected graph.
Then $G$ has an \textit{(open) ear decomposition} (cf. \cite[Proposition~3.1.1]{diestel2017graph}), i.e., $G$ can be decomposed as $C\cup P_1\cup\cdots\cup P_r$ where $C$ is a cycle, $P_i$ is a path and $(C\cup P_1 \cup \cdots \cup P_{i-1})\cap P_i$ consists of end vertices of $P_i$ for each $i$.
Let $\phi(G)$ denote the minimum number of even paths (containing the first cycle) in an ear decomposition of $G$.
Note that $\phi(G)\ge 1$ if $G$ is bipartite since the first cycle must be an even
cycle.
We call an ear decomposition of $G$ \textit{optimal} if the number of even paths is just $\phi(G)$.
Let $G$ be a $2$-connected bipartite graph and fix an optimal ear decomposition $G=C\cup P_1\cup\cdots\cup P_r$ with
\begin{equation}\label{oed}
    \begin{split}
         & V(P_i)=\set{p_{i,0},p_{i,1},\ldots,p_{i,m_i}}, \quad V(C)=\set{c_1,\ldots,c_{2n}},                      \\
         & E(P_i)=\set{\set{p_{i,0},p_{i,1}},\set{p_{i,1},p_{i,2}},\ldots,\set{p_{i,m_i-1},p_{i,m_i}}} \text{ and} \\
         & E(C)=\set{\set{c_1,c_2},\set{c_2,c_3},\ldots,\set{c_{2n-1},c_{2n}},\set{c_{2n},c_1}}.
    \end{split}
\end{equation}
We set
\begin{equation}\label{edgeset}
    \begin{split}
         & E_i \coloneqq
        \begin{cases*}
            \{\{p_{i,1},p_{i,2}\},\{p_{i,3},p_{i,4}\},\cdots,\{p_{i,m_i -2},p_{i,m_i-1}\}\} & if $m_i$ is odd,  \\
            \{\{p_{i,1},p_{i,2}\},\{p_{i,3},p_{i,4}\},\cdots,\{p_{i,m_i-1},p_{i,m_i}\}\}    & if $m_i$ is even,
        \end{cases*} \\
         & E_C \coloneqq \{\{c_1,c_2\},\{c_3,c_4\},\cdots,\{c_{2n-1},c_{2n}\}\} \text{ and}                 \\
         & \calE \coloneqq E_C\cup\rbra{\bigcup_{i=1}^rE_i}.
    \end{split}
\end{equation}
Note that $E_i=\emptyset$ if $m_i=1$, and that $\card{\calE}=(\phi(G)+\card{V(G)}-1)/2$.

\medskip

The following terminologies and facts are mentioned in \cite[Section~5]{frank1993conservative}:
For a directed graph $D$,
\begin{itemize}
    \item the set of edges entering a subset $W\subset V(D)$ is called a \textit{directed cut} if no edges leave $W$.
    \item A directed graph $D$ is \textit{strongly connected} if there is a directed path from every vertex to any other.
    \item The following are equivalent:
          \begin{enumerate}
              \item $D$ is strongly connected;
              \item $D$ does not contain directed cuts;
              \item $D$ has a so-called \textit{directed ear decomposition}, which is an ear decomposition of $D$, denoted as \Cref{oed}, such that each edge $\set{c_i,c_{i+1}}$ (resp. $\set{p_{i,j},p_{i,j+1}}$) is directed from $c_i$ to $c_{i+1}$ (resp. $p_{i,j}$ to $p_{i,j+1}$).
          \end{enumerate}
\end{itemize}

For a $2$-connected bipartite graph $G$ with the partition $V(G)=V_1\sqcup V_2$, let $G'$ be the directed graph obtained from $G$ by orienting each edge from $V_1$ to $V_2$
and let $\widetilde{G}'$ be the directed graph obtained by reversing the orientation of edges $\calE$ of $G'$.
We can see that $\widetilde{G}'$ has a directed ear decomposition.
Let $\sigma(G)$ denote the minimum cardinality of a subset of $E(G)$ that contains at least one edge of each directed cut of $G'$.

According to \cite{frank1993conservative} and \cite{valencia2003canonical}, we have the following:

\begin{thm}[{\cite[Theorems~4.5 and 5.3]{frank1993conservative}, \cite[Proposition~4.2]{valencia2003canonical}}]\label{thm:relation}
    Let $G$ be a $2$-connected bipartite graph.
    Then we have
    \[
        \ell_G=\sigma(G)=\card{\calE}=\frac{\phi(G)+\card{V(G)}-1}{2}.
    \]
\end{thm}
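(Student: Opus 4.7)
The plan is to establish the chain $\card{\calE} \ge \ell_G \ge \sigma(G)$ directly from the cone description of the edge ring and close the loop with the combinatorial min-max equality $\sigma(G) \ge \card{\calE}$ of Frank. The purely combinatorial identity $\card{\calE} = (\phi(G) + \card{V(G)} - 1)/2$ is a direct count: using $\card{V(G)} = 2n + \sum_{i=1}^r (m_i - 1)$ (only internal vertices of each ear are new), the fact that $\phi(G) = 1 + \card{\set{i : m_i \text{ even}}}$, and the parity-wise sum $\card{E_C} + \sum_i \card{E_i} = n + \sum_{m_i \text{ odd}} (m_i-1)/2 + \sum_{m_i \text{ even}} m_i/2$ from \eqref{edgeset}, the formula simplifies to the claimed value.

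For the upper bound $\ell_G \le \card{\calE}$, I would exhibit an explicit interior lattice point at level $\card{\calE}$, namely $v = \sum_{e \in \calE} \rho(e)$. An easy induction along the ear decomposition shows that every vertex of $G$ is incident to some edge of $\calE$, so $v_i > 0$ for all $i$. By \Cref{facet}, the remaining facet inequalities are those of the form $H_T$ for acceptable $T$, and by \Cref{lem:indepe} it is enough to exhibit, for each such $T$, an edge of $\calE$ lying in $E(G) \setminus E(B(T))$ that meets $N_G(T)$. Orienting $G$ from $V_1$ to $V_2$, this reduces exactly to the statement that $\calE$ meets every directed cut of $G'$, which holds because reversing the orientations of the edges in $\calE$ produces the strongly connected digraph $\widetilde{G}'$, and a strongly connected digraph contains no directed cuts.

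The inequality $\ell_G \ge \sigma(G)$ is obtained by dualizing: given any interior lattice point $v$ of degree $\ell_G$, write $v = \sum_e c_e \rho(e)$ with $c_e \in \ZZ_{\ge 0}$ and set $F = \setcond{e \in E(G)}{c_e > 0}$; then $\card{F} \le \ell_G$, and the same translation of the facet inequalities shows that $F$ meets every directed cut of $G'$, whence $\sigma(G) \le \card{F} \le \ell_G$. The main obstacle is the remaining min-max inequality $\sigma(G) \ge \card{\calE}$, which is Frank's theorem on conservative weightings of strongly connected digraphs: from a minimum directed-cut transversal $F$, reversing its orientations yields a strongly connected orientation of $G'$, and extracting a directed ear decomposition from this orientation and pulling it back to $G$ produces an ear decomposition whose number of even ears is controlled by $\card{F}$, giving $\phi(G) + \card{V(G)} - 1 \le 2\card{F}$. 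This last step is the deep combinatorial content and is the hard part of the argument; it is carried out in \cite{frank1993conservative,valencia2003canonical}. Once granted, the chain closes and all four quantities coincide.
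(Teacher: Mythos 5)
The paper offers no proof of this statement at all---it is imported wholesale from \cite{frank1993conservative} and \cite{valencia2003canonical}---so there is no internal argument to compare against; what you have written is a reconstruction, and it is a sound one that supplies more than the paper does. The count $\card{\calE}=(\phi(G)+\card{V(G)}-1)/2$ checks out exactly as you describe (the paper also only asserts it in a passing ``Note''). Your proof of $\ell_G\le\card{\calE}$ via the interior point $v_\calE$ reproduces, essentially verbatim, the paper's \Cref{lem:interior}: coverage of all vertices by $\calE$ handles the $H_i$, and the fact that $\widetilde{G}'$ has a directed ear decomposition (hence no directed cuts) forces $\calE$ to meet the dicut attached to each acceptable set, so \Cref{lem:indepe} applies. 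The dualization $\ell_G\ge\sigma(G)$ is also correct: normality of bipartite edge rings lets you write an interior lattice point of degree $\ell_G$ as $\sum_e c_e\rho(e)$ with $\sum_e c_e=\ell_G$, and for a directed cut $\delta^-(W)$ of $G'$ one has $\sum_{e\in\delta^-(W)}c_e=\sum_{j\in V_2\cap W}v_j-\sum_{i\in V_1\cap W}v_i$, which dominates the $H_{V_1\cap W}$-functional and is therefore strictly positive for every nonempty dicut; hence the support is a dicut transversal of size at most $\ell_G$. The one point to be careful about is your parenthetical sketch of the remaining inequality $\sigma(G)\ge\card{\calE}$: the claim that reversing a minimum dijoin of $G'$ yields a strongly connected digraph is delicate (for general dijoins it is false), and Frank's actual argument proceeds through conservative $\pm1$-weightings and $T$-joins rather than naive arc reversal. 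Since you explicitly black-box this step with the same citations the paper uses for the entire theorem, the logical chain still closes; just do not present that sketch as if it were the proof.
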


A graph $G$ is called \textit{matching-covered} if it is connected and each edge is contained in some perfect matching.
There are many characterizations of matching-covered bipartite graphs (see, e.g., \cite[Section~4.1]{lovasz2009matching} or \cite[Section~6.1]{asratian1998bipartite}), of which the following is the most important one for this paper:

\begin{thm}[cf. {\cite[Theorems~6.1.5 and 6.1.6]{asratian1998bipartite}}]\label{matching-covered}
    Let $G$ be a connected bipartite graph with the partition $V(G)=V_1\sqcup V_2$.
    Then the following are equivalent:
    \begin{enumerate}
        \item $G$ is matching-covered;
        \item $\card{V_1}=\card{V_2}$ and $\card{N_G(T)}>\card{T}$ for every non-empty subset $T\subsetneq V_1$;
        \item $G$ is $2$-connected and $\phi(G)=1$.
    \end{enumerate}
\end{thm}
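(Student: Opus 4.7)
The plan is to split the three-way equivalence into two independent chains: (1) $\Leftrightarrow$ (2) via Hall's marriage theorem, and (1) $\Leftrightarrow$ (3) via ear-decomposition theory. Since the induction for (3) $\Rightarrow$ (1) will use the strict Hall condition applied to a smaller graph, I would organize the proof in the order (1) $\Leftrightarrow$ (2), then (3) $\Rightarrow$ (1), then finally (1) $\Rightarrow$ (3).

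For (1) $\Rightarrow$ (2), a perfect matching forces $\card{V_1} = \card{V_2}$, and if $\card{N_G(T)} = \card{T}$ for some non-empty $T \subsetneq V_1$, then connectedness of $G$ would produce an edge $e = \{u, w\}$ with $u \in V_1 \setminus T$ and $w \in N_G(T)$; since any perfect matching saturates $N_G(T)$ with the vertices of $T$, this $e$ can lie in no perfect matching, contradicting matching-coveredness. For (2) $\Rightarrow$ (1), given any edge $e = \{u, w\}$ I would verify Hall's condition on $H \coloneqq G \setminus \{u, w\}$, splitting into $T = V_1 \setminus \{u\}$ (where strict Hall forces $N_G(T) = V_2$) and proper non-empty $T \subsetneq V_1 \setminus \{u\}$ (where the strict inequality absorbs the possible removal of $w$), and then invoke Hall's theorem to extend $e$ to a perfect matching of $G$.

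For (3) $\Rightarrow$ (1), I would induct on the number of ears. The base case of an even cycle is clear from its two alternating perfect matchings. In the induction step $G = G' \cup P_r$, with $P_r$ an odd path from $a$ to $b$, an edge $e \in E(G')$ is handled by extending a perfect matching of $G'$ through $e$ via the consecutive pairing of the internal vertices of $P_r$, while for $e = \{v_i, v_{i+1}\} \in E(P_r)$ a parity analysis of $i$ forces either both of $a, b$ to be matched inside $P_r$ or both inside $G'$; in the former case I invoke the already-proven (1) $\Leftrightarrow$ (2) to check that $G' \setminus \{a, b\}$ satisfies Hall and hence has a perfect matching.

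For (1) $\Rightarrow$ (3), I would first rule out cut vertices by a parity argument: if $v$ were a cut vertex, then the signs of $\card{V_1 \cap C} - \card{V_2 \cap C}$ across the components $C$ of $G \setminus v$ would single out a unique component that $v$ must match into in any perfect matching, so any edge from $v$ to another component would lie in no perfect matching. This yields $2$-connectedness. To then obtain $\phi(G) = 1$, I would appeal to the classical bipartite ear-decomposition theorem of Lov\'asz, which builds a matching-covered bipartite graph from a single edge by successively attaching odd ears; combining the initial edge with the first odd ear produces an even cycle that starts the paper's ear decomposition, and all subsequent odd ears give $\phi(G) = 1$. The hard part will be this last step: producing the odd-ear decomposition from matching-coveredness requires an extremal argument locating a removable odd ear whose deletion leaves a matching-covered subgraph, which is the heart of Lov\'asz's theorem and the deepest ingredient in the proof.
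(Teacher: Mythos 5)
The paper offers no proof of this theorem at all: it is quoted from Asratian--Denley--H\"aggkvist (``cf.\ Theorems~6.1.5 and 6.1.6''), so there is no in-paper argument to compare yours against, and your proposal has to be judged on its own terms. Judged that way, it is essentially correct and consists of the standard arguments: the Hall-type proof of (i) $\Leftrightarrow$ (ii) (including the correct observation that when $\card{N_G(T)}=\card{T}$ every perfect matching pairs $N_G(T)$ entirely with $T$, so a connecting edge from $V_1\setminus T$ into $N_G(T)$ lies in no perfect matching), the induction on odd ears for (iii) $\Rightarrow$ (i) with the two alternating matchings of an ear and the Hall check on $G'\setminus\{a,b\}$, and the cut-vertex parity argument for $2$-connectedness are all sound. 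The one step where you supply no argument is (i) $\Rightarrow$ ``$\phi(G)=1$'': citing the Lov\'asz/Hetyei odd-ear-decomposition theorem is legitimate, but be aware that this cited theorem \emph{is} the substance of the implication (i) $\Rightarrow$ (iii), so your proof is self-contained only up to exactly the point you flag as the hard part --- which puts you on the same footing as the paper, not ahead of it. If you want to close that gap with the toolkit the paper itself assembles in Section~3.1, note that condition (ii) says precisely that the orientation obtained by directing all edges from $V_1$ to $V_2$ and then reversing the edges of a fixed perfect matching $M$ has no directed cut (a directed cut with no leaving edges forces $N_G(T)\subseteq M(T)$ for $T=W\cap V_1$, i.e.\ a tight set, and conversely every tight set yields a directed cut); by the quoted equivalence of Frank, this digraph then has a directed ear decomposition, and since each $V_2$-vertex has a unique outgoing edge and each $V_1$-vertex a unique incoming edge (both the $M$-edge), the initial directed cycle is $M$-alternating and even, and every directed ear must run from $V_1$ to $V_2$ and hence be odd. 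This gives (ii) $\Rightarrow$ (iii) without invoking the extremal ``removable ear'' argument and is probably the route the authors have in mind, given that \Cref{lem:interior} already uses the directed-cut picture.
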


\begin{rem}
    If $G$ is not bipartite, then (i) and (iii) in \Cref{matching-covered} are not equivalent, but the implication ``(i) $\Longrightarrow$ (iii)'' is true in general (see \cite[Section~5.4]{lovasz2009matching}).
\end{rem}

For a graph $G$, we say that $G$ is $k$-\textit{regular} if all vertices of $G$ have the same degree $k$, and $G$ is \textit{regular} if $G$ is $k$-regular for some $k$.

\begin{lem}
    \label{lem:B(T)}
    Let $G$ be a connected $k$-regular graph.
    Then for an independent set $T\subset V(G)$,
    one has $\card{T} = \card{N_G(T)}$ if and only if $G = B(T)$.
\end{lem}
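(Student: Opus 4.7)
The plan is to prove the lemma by a direct edge-counting argument that exploits $k$-regularity on both sides of the pair $(T, N_G(T))$, and then to invoke connectedness of $G$ to rule out the possibility that $V(G)$ strictly contains $T \cup N_G(T)$.

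For the ``if'' direction, I would observe that if $G = B(T)$, then by the very definition of $B(T)$ we have $V(G) = T \cup N_G(T)$ and every edge of $G$ lies between $T$ and $N_G(T)$. Summing degrees on each side of this bipartition, $k$-regularity gives $k\card{T} = \card{E(G)} = k\card{N_G(T)}$, so $\card{T} = \card{N_G(T)}$.

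For the ``only if'' direction, the core idea is to count the edges of $B(T)$ from both sides. Because $T$ is independent, every edge of $G$ incident to a vertex of $T$ already lies in $B(T)$, so the $T$-side count gives $\card{E(B(T))} = k\card{T}$. The $N_G(T)$-side count is at most $k\card{N_G(T)}$, with equality precisely when every vertex of $N_G(T)$ has all of its $k$ neighbors inside $T$. Since $\card{T} = \card{N_G(T)}$ forces these two counts to coincide, no edge of $G$ can leave $T \cup N_G(T)$. Connectedness of $G$ together with $T \neq \emptyset$ then forces $V(G) = T \cup N_G(T)$; combined with the fact that $T$ is independent and that $N_G(T)$ has no neighbors outside $T$, this yields that every edge of $G$ lies between $T$ and $N_G(T)$, i.e., $G = B(T)$.

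The argument is essentially routine; the only subtle point is the final step, where one must distinguish between $T \cup N_G(T)$ merely being a union of connected components and it genuinely exhausting $V(G)$. This is precisely where connectedness of $G$ (rather than just $T \ne \emptyset$) is used, and I do not expect any other genuine obstacle.
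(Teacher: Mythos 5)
Your proposal is correct and follows essentially the same route as the paper: both directions rest on the double count $k\card{T}=\card{E(B(T))}=\sum_{v\in N_G(T)}\deg_{B(T)}(v)\le k\card{N_G(T)}$ (valid because $T$ is independent), with equality forcing every neighbor of $N_G(T)$ to lie in $T$ and connectedness then giving $G=B(T)$. Your write-up is if anything slightly more explicit than the paper's about why no edge can leave $T\cup N_G(T)$, but the argument is the same.
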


\begin{proof}
    Since $G$ is $k$-regular, we have
    \begin{equation}\label{ineq_reg}
        k \card{T} = \sum_{v \in N_G(T)} \deg_{B(T)}(v) \le \sum_{v \in N_G(T)} k = k \card{N_G(T)},
    \end{equation}
    where $\deg_{B(T)}(v)$ is the degree of $v$ in the subgraph $B(T)$.
    If $\card{T} = \card{N_G(T)}$, then the equality of \Cref{ineq_reg} holds, which implies that $G=B(T)$ since $G$ is connected.

    Conversely, assume that $G = B(T)$.
    In this case, $G$ is bipartite with the partition $V(G) = T \sqcup N_G(T)$.
    Moreover, the equality of \Cref{ineq_reg} holds, so we have
    $\card{T} = \card{N_G(T)}$.
\end{proof}

\bigskip

\subsection{The case of bipartite graphs}\label{subsec:bipar}
In this subsection, we investigate when the edge rings of bipartite graphs are pseudo-Gorenstein.
Let $G$ be a bipartite graph with the partition $V_1\sqcup V_2$ and let $B_1,\ldots,B_m$ be the blocks of $G$.
Then we have $h_{s(\kk[G])}=\prod_{i=1}^m h_{s(\kk[B_i])}$ from \Cref{product}, so it is enough to study the case when $G$ is $2$-connected.
In what follows, we assume $G$ is $2$-connected, fix an optimal ear decomposition described as \Cref{oed} and let $\calE$ be the subset of $E(G)$ defined in \Cref{edgeset}.

We first give the following lemma:

\begin{lem}\label{lem:interior}
    We have $v_\calE \in \int(S_G)$.
\end{lem}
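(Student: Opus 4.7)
The plan is to show that $v_\calE\in S_G$ and that $v_\calE$ lies strictly on the positive side of every facet hyperplane of $\RR_{\ge 0}\calA_G$; together these give $v_\calE\in\int(S_G)$. Membership in $S_G$ is immediate, since $v_\calE=\sum_{e\in\calE}\rho(e)$ is a nonnegative integer combination of the generators of $S_G$. By \Cref{facet}, because $G$ is $2$-connected bipartite, the facets of $P_G$ correspond to the hyperplanes $H_i$ with $i\in V(G)$ (every vertex of a $2$-connected graph is ordinary) and to the hyperplanes $H_T$ with $T$ an acceptable set on either side of the bipartition.

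First I would check that $(v_\calE)_i>0$ for every $i\in V(G)$ by showing that $\calE$ covers every vertex. Indeed, $E_C$ is a perfect matching of $C$; each $E_i$ covers every internal vertex of $P_i$ (and also the endpoint $p_{i,m_i}$ when $m_i$ is even); and every endpoint of each $P_i$ lies in $C\cup P_1\cup\cdots\cup P_{i-1}$ and so is covered by induction on $i$.

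The substantive step is the acceptable-set inequality. Fix an acceptable $T\subseteq V_1$. By \Cref{lem:indepe} it suffices to exhibit an edge $f\in\calE$ with $f\cap T=\emptyset$ and $f\cap N_G(T)\neq\emptyset$; such an $f$ is precisely an edge of $G'$ oriented from $V_1\setminus T$ to $N_G(T)$. I would produce such an edge from the cut-hitting characterization of $\calE$ underlying \Cref{thm:relation}: set $W\coloneqq V(B(T))=T\cup N_G(T)$ and examine the edges of $G'$ crossing $\partial W$. No edge of $G'$ leaves $W$, because $T$ is independent with neighborhood $N_G(T)\subseteq W$ and no edge leaves $V_2$ in $G'$. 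On the other hand, acceptability of $T$ gives that $G\setminus V(B(T))$ is nonempty and has an edge, and then the connectedness of $G$ forces at least one boundary edge, which by bipartiteness must go from $V_1\setminus T$ to $N_G(T)$. Hence $W$ is a directed cut of $G'$. Since $\widetilde{G}'$ carries a directed ear decomposition it is strongly connected and so has no directed cuts; therefore $\calE$ must contain at least one edge that enters $W$ in $G'$, and any such edge is the desired $f$. The case $T\subseteq V_2$ is identical with $V_1$ and $V_2$ swapped.

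The main obstacle is recognizing that $V(B(T))$ gives precisely the directed cut of $G'$ whose entering edges are the ones demanded by \Cref{lem:indepe}, and then using acceptability to guarantee that this cut is both genuine (no exits) and nonempty (at least one entry). Once that identification is in hand, combining the coordinate positivity with the acceptable-set positivity places $v_\calE$ in the relative interior of $\RR_{\ge 0}\calA_G$, which together with $v_\calE\in S_G$ yields $v_\calE\in\int(S_G)$.
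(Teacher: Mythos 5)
Your proof is correct and follows essentially the same route as the paper: coordinate positivity from the fact that $\calE$ covers every vertex, and the acceptable-set inequality by observing that the edges of $G'$ entering $W=T\cup N_G(T)$ form a directed cut of $G'$ which cannot be a directed cut of $\widetilde{G}'$ (since the latter has a directed ear decomposition), forcing $\calE$ to meet it and allowing \Cref{lem:indepe} to apply. The only differences are cosmetic: you spell out the vertex-covering and the nonemptiness of the cut in slightly more detail than the paper does.
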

\begin{proof}
    From \Cref{facet}, it is enough to show that $v_\calE \in H_i^>$ for each $i\in [d]$ and $v_\calE \in H_T^>$ for any acceptable set $T\subsetneq V_1$.
    Since $\bigcup_{e\in \calE}e=V(G)$,
    we have $v_\calE \in H_i^>$ for any $i\in V(G)$.
    For an acceptable set $T\subsetneq V_1$, let $W \coloneqq T\cup N_G(T)$ and $\calC \coloneqq \set{\set{v,w} \in E(G): v\in V_1\setminus T \text{ and } w\in N_G(T)}$.
    Then $\calC$ is a directed cut of $G'$, but is not a directed cut of $\widetilde{G}'$ since $\widetilde{G}'$ has a directed ear decomposition, and hence $\widetilde{G}'$ does not contain directed cuts.
    This implies that $\calC\cap \calE\neq \emptyset$, and hence $v_\calE \in H_T^>$ from \Cref{lem:indepe}.
\end{proof}

We now present the following main theorem in this subsection:

\begin{thm}\label{thm:char_hs=1}
    Let $G$ be a $2$-connected bipartite graph.
    Then the following are equivalent:
    \begin{enumerate}
        \item $\kk[G]$ is pseudo-Gorenstein;
        \item $\phi(G)=1$;
        \item $G$ is matching-covered.
    \end{enumerate}
\end{thm}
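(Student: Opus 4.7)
The equivalence (ii) $\iff$ (iii) is immediate from \Cref{matching-covered} under the $2$-connectedness hypothesis, so the real work is to prove (i) $\iff$ (ii). Since $G$ is bipartite it has no odd cycles, so $\kk[G]$ is normal; the canonical module is generated by $\set{t^v : v \in \int(S_G)}$, and grading considerations yield
\[
    h_{s(\kk[G])} = \dim_\kk (\omega_{\kk[G]})_{-a} = \card{\int(\ell_G P_G) \cap \ZZ^d}.
\]
For any $v$ in this set, \Cref{facet} and $2$-connectedness force $v_i \ge 1$ for each $i$, while $v \in \ell_G P_G$ yields $\sum_{i \in V_j} v_i = \ell_G$ for $j = 1, 2$. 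Combining these with \Cref{thm:relation} gives the key identity $\sum_i (v_i - 1) = 2\ell_G - \card{V(G)} = \phi(G) - 1$, which I plan to exploit.

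For (ii) $\Rightarrow$ (i), assume $\phi(G) = 1$. The identity above forces $v = (1, \ldots, 1)$, and conversely $(1, \ldots, 1)$ does lie in the interior: for any acceptable $T$, \Cref{matching-covered}\,(ii) gives $\card{N_G(T)} > \card{T}$, which is exactly the strict inequality $v \in H_T^>$. Hence the interior contains a single lattice point, so $\kk[G]$ is pseudo-Gorenstein.

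For the contrapositive of (i) $\Rightarrow$ (ii), assume $\phi(G) \ge 2$ and exhibit a second interior lattice point. At least one ear $P_i$ of the fixed optimal decomposition must be even; introduce the alternative matching
\[
    E_i'' \coloneqq \set{\set{p_{i,0}, p_{i,1}}, \set{p_{i,2}, p_{i,3}}, \ldots, \set{p_{i,m_i-2}, p_{i,m_i-1}}},
\]
which omits $p_{i,m_i}$ instead of $p_{i,0}$, and set $\calE'' \coloneqq (\calE \setminus E_i) \cup E_i''$. Reversing the orientation of $P_i$ produces another optimal ear decomposition whose canonical edge set is exactly $\calE''$, so \Cref{lem:interior} applied to the reversed decomposition gives $v_{\calE''} \in \int(S_G)$. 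Both $v_\calE$ and $v_{\calE''}$ sit at degree $\ell_G$, and their difference $\eb_{p_{i,0}} - \eb_{p_{i,m_i}}$ is nonzero, so $h_{s(\kk[G])} \ge 2$.

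The main obstacle is this reversal step. One needs to check that the reversed ear decomposition is still optimal---immediate, since reversing $P_i$ preserves its length (hence its parity contribution to $\phi(G)$) and its endpoints---and that the proof of \Cref{lem:interior} applies verbatim to $\calE''$. The latter relies only on two facts, both invariant under reversal: $\calE''$ still covers $V(G)$, and the digraph obtained from $G'$ by reversing the edges of $\calE''$ is strongly connected because it admits a directed ear decomposition coming from the reversed ear decomposition of $G$.
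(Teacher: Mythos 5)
Your proposal is correct and follows essentially the same route as the paper: reduce (ii)$\iff$(iii) to \Cref{matching-covered}, count interior lattice points of $\ell_G P_G$, produce the second point $v_{\calE''}$ (the paper's $\calE'$ is literally your $\calE''$) when $\phi(G)\ge 2$, and identify $(1,\ldots,1)$ as the unique interior point when $\phi(G)=1$. Your explicit justification that $\calE''$ is the canonical edge set of the reversed, still-optimal ear decomposition (so that \Cref{lem:interior} applies), and the identity $\sum_i(v_i-1)=\phi(G)-1$, merely make explicit what the paper leaves implicit.
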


\begin{proof}
    It follows from \Cref{matching-covered} that (ii) and (iii) are equivalent, so it is enough to show that (i) and (ii) are equivalent.

    First, we show that (i) implies (ii).
    Suppose that $\phi(G)\ge 2$.
    Then there exists $i$ such that $P_i$ has the even length.
    Let $E'_i \coloneqq \set{\set{p_{i,0},p_{i,1}},\set{p_{i,2},p_{i,3}},\ldots,\set{p_{i,m_i-2},p_{i,m_i-1}}}$
    and let $\calE' \coloneqq (\calE\setminus E_i)\cup E'_i$.
    We can see that $v_\calE \neq v_{\calE'}$ and $v_\calE,v_{\calE'}\in \int(S_G)$ from \Cref{lem:interior}.
    In particular, we have $v_\calE,v_{\calE'}\in \int(\ell_G P_G)\cap \ZZ^d$
    since $\card{\calE}=\card{\calE'}=\ell_G$ from \Cref{thm:relation}.
    Therefore, we get $h_s=\dim (\omega_{\kk[G]})_{\ell_G}=\card{\int(\ell_G P_G)\cap \ZZ^d}\ge 2$.

    Next, we prove that (ii) implies (i).
    Actually, this has been already shown in \cite[Theorem~32]{guan2022coefficients} via interior polynomials, but we will provide a short self-contained proof of it.
    In this case, we can easily see that $v_\calE=(1,\ldots,1)$ and this is the unique lattice point in $\int(\ell_G P_G)$.
    Thus, we have $h_s=1$.
\end{proof}

\Cref{thm:char_hs=1} gives some corollaries.

\begin{cor}\label{cor:char_hs=1}
    Let $G$ be a bipartite graph and let $B_1,\ldots, B_m$ be the blocks of $G$.
    Then $\kk[G]$ is pseudo-Gorenstein if and only if $B_i$ is matching-covered for each $i=1,\ldots,m$.
\end{cor}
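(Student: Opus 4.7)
The plan is to deduce this corollary directly from \Cref{thm:char_hs=1} via the block decomposition in \Cref{product}. By \Cref{product}, we have $\kk[G] \cong \kk[B_1] \otimes_\kk \cdots \otimes_\kk \kk[B_m]$. The Hilbert series of a tensor product of graded $\kk$-algebras is the product of the individual Hilbert series, and the Krull dimension is additive, so the $h$-polynomials multiply:
\[
    h(\kk[G]; t) = \prod_{i=1}^m h(\kk[B_i]; t).
\]
In particular, the socle degree decomposes as $s(\kk[G]) = \sum_{i=1}^m s(\kk[B_i])$, and the leading coefficient is
\[
    h_{s(\kk[G])} = \prod_{i=1}^m h_{s(\kk[B_i])}.
\]

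Since each $\kk[B_i]$ is Cohen--Macaulay with nonnegative $h$-vector whose leading entry is by definition nonzero, every $h_{s(\kk[B_i])}$ is a positive integer. Therefore the product equals $1$ if and only if each factor $h_{s(\kk[B_i])}$ equals $1$, which is precisely the statement that every block $\kk[B_i]$ is pseudo-Gorenstein.

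To finish, I would apply \Cref{thm:char_hs=1} to each block: for every block $B_i$ with at least three vertices, the theorem gives the equivalence between pseudo-Gorensteinness of $\kk[B_i]$ and matching-coveredness of $B_i$. The only degenerate case is a block consisting of a single edge $K_2$: here $\kk[K_2]$ is a polynomial ring in one variable, hence Gorenstein (so $h_s = 1$), and $K_2$ is trivially matching-covered, so the equivalence also holds. Combining these observations gives that $\kk[G]$ is pseudo-Gorenstein if and only if each $B_i$ is matching-covered. There is no real obstacle in this argument beyond verifying the multiplicativity of the $h$-polynomial under tensor products and handling the trivial block case.
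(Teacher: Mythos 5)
Your proposal is correct and follows essentially the same route as the paper: the paper's proof is the one-line observation that $h_{s(\kk[G])}=\prod_{i=1}^m h_{s(\kk[B_i])}$ (justified via \Cref{product} at the start of the subsection) combined with \Cref{thm:char_hs=1} applied to each block. Your additional details --- the positivity of each leading coefficient forcing every factor to equal $1$, and the explicit check of the $K_2$ block --- are sound elaborations of the same argument.
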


\begin{proof}
    It follows immediately from $h_{s(\kk[G])}=\prod_{i=1}^m h_{s(\kk[B_i])}$.
\end{proof}

Let $G$ be a graph.
A cycle containing all the vertices of $G$ is said to be a \textit{Hamilton cycle}, and a graph containing a Hamilton cycle is said to be \textit{Hamiltonian}.

\begin{cor}\label{cor:hamilton}
    Let $G$ be a Hamiltonian bipartite graph.
    Then $\kk[G]$ is pseudo-Gorenstein.
\end{cor}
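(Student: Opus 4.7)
The plan is to reduce the claim to \Cref{thm:char_hs=1} by exhibiting an optimal ear decomposition of $G$ that starts with the Hamilton cycle.

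First, I would observe that since $G$ has a Hamilton cycle, it is $2$-connected, so the block decomposition of $G$ is trivial: $G$ itself is the unique block. Thus by \Cref{cor:char_hs=1}, it suffices to show that $G$ is matching-covered, and by the equivalence of (ii) and (iii) in \Cref{thm:char_hs=1}, this in turn reduces to proving $\phi(G) = 1$.

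Next, let $H$ be a Hamilton cycle of $G$. Because $G$ is bipartite, $H$ has even length. Every edge $e \in E(G) \setminus E(H)$ is a chord of $H$, hence both of its endpoints already lie in $H$. I would build an ear decomposition of $G$ by starting with $H$ and then appending each of these chords, one at a time, as an ear consisting of a single edge (with no interior vertex). Each such ear is a path of length $1$, which is odd, so the only even path appearing in the decomposition is the initial cycle $H$ itself. This yields $\phi(G) \le 1$. On the other hand, $\phi(G) \ge 1$ since $G$ is bipartite (the first cycle of any ear decomposition must be even). Therefore $\phi(G) = 1$, and invoking \Cref{thm:char_hs=1} finishes the proof.

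There is no real obstacle: the only point that requires a moment of care is verifying that appending the chords really produces a valid ear decomposition, which follows immediately because after the first step every vertex of $G$ already lies in the constructed subgraph. Everything else is an application of the theorems already proved above.
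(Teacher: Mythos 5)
Your proposal is correct and follows essentially the same route as the paper: the paper likewise notes that $G$ admits an ear decomposition consisting of a Hamilton cycle together with ears that are single chords (paths of length $1$), so that $\phi(G)=1$, and then invokes \Cref{thm:char_hs=1}. Your additional remarks on $2$-connectedness and the validity of the ear decomposition are correct but are the same argument spelled out in more detail.
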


\begin{proof}
    In this situation, we have $\phi(G)=1$.
    Indeed, $G$ has an optimal ear decomposition $C\cup P_1\cup \cdots P_r$ where $C$ is a Hamilton cycle of $G$ and $P_i$ is a path of length $1$.
\end{proof}

\begin{cor}\label{cor:biparregular}
    Let $G$ be a connected regular bipartite graph with the partition $V_1\sqcup V_2$.
    Then $\kk[G]$ is pseudo-Gorenstein.
\end{cor}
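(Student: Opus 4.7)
The plan is to verify the Hall-type condition in \Cref{matching-covered}(ii) for $G$ itself, so that by the equivalence in that theorem $G$ is matching-covered; since this also forces $G$ to be $2$-connected (by the implication (i)$\Rightarrow$(iii) of the same theorem), $G$ will be its own unique block, and \Cref{cor:char_hs=1} (equivalently \Cref{thm:char_hs=1}) will conclude. I expect the only mild obstacle to be promoting the naive Hall defect inequality from ``$\ge$'' to the strict inequality ``$>$'' demanded by condition (ii), and this is exactly where \Cref{lem:B(T)} will play its role.

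First I would note that $\card{V_1}=\card{V_2}$: counting edges from each side of the bipartition in the $k$-regular graph $G$ gives $k\card{V_1}=\card{E(G)}=k\card{V_2}$. Next, fix a nonempty $T\subsetneq V_1$. A double count of edges of the subgraph $B(T)$ yields
\[
    k\card{T}=\sum_{v\in N_G(T)}\deg_{B(T)}(v)\le k\card{N_G(T)},
\]
so $\card{N_G(T)}\ge\card{T}$. To upgrade this to strict inequality I would invoke \Cref{lem:B(T)}: the equality $\card{T}=\card{N_G(T)}$ would force $G=B(T)$, but $V_1\setminus T$ is a nonempty subset of $V(G)$ that is disjoint from $V(B(T))=T\cup N_G(T)$, since $N_G(T)\subseteq V_2$ and $(V_1\setminus T)\cap V_2=\emptyset$. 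This contradiction yields $\card{N_G(T)}>\card{T}$.

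With condition (ii) of \Cref{matching-covered} in hand, $G$ is matching-covered, hence also $2$-connected by that theorem. Thus $G$ is its own unique block, and \Cref{thm:char_hs=1} immediately gives that $\kk[G]$ is pseudo-Gorenstein.
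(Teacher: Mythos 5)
Your proof is correct and follows essentially the same route as the paper: verify condition (ii) of \Cref{matching-covered} by combining the regular-bipartite double count (which the paper outsources to a citation and you carry out directly) with \Cref{lem:B(T)} to upgrade $\card{N_G(T)}\ge\card{T}$ to a strict inequality, then conclude via \Cref{thm:char_hs=1}. Your explicit remark that matching-covered forces $2$-connectedness (so that \Cref{thm:char_hs=1} applies) is a small point the paper leaves implicit, but the argument is the same.
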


\begin{proof}
    From \Cref{matching-covered}, it is enough to show that $\card{V_1}=\card{V_2}$ and $\card{N_G(T)}>\card{T}$ for every non-empty subset $T\subsetneq V_1$.
    The fact that $G$ is regular implies that $G$ has a perfect matching (and hence $\card{V_1}=\card{V_2}$) and that $\card{N_G(T)}\ge \card{T}$ for every subset $T\subset V_1$ (\cite[see, e.g., Corollaries~6.1.3 and 6.1.4]{asratian1998bipartite}).
    If $\card{N_G(T)}=\card{T}$ holds, then we have $T=V_1$ from \Cref{lem:B(T)}, thus $\card{N_G(T)}>\card{T}$ for every non-empty subset $T\subsetneq V_1$.
\end{proof}

\begin{cor}\label{cor:evenver}
    Let $G$ be a $2$-connected bipartite graph with the partition $V_1\sqcup V_2$.
    If $\kk[G]$ is pseudo-Gorenstein, then we have $\card{V_1}=\card{V_2}$ (in particular, $\card{V(G)}$ is even).
\end{cor}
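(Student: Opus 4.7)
The plan is to simply chain together the two structural results already proved. By \Cref{thm:char_hs=1}, the pseudo-Gorenstein hypothesis on $\kk[G]$ forces $G$ to be matching-covered. Then \Cref{matching-covered} tells us that in a matching-covered connected bipartite graph the two color classes have equal size: condition (ii) there is exactly $\card{V_1}=\card{V_2}$ together with the strict Hall-type inequality. Taking the first half of that condition gives $\card{V_1}=\card{V_2}$, hence $\card{V(G)}=2\card{V_1}$ is even.

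So the whole argument is a one-step deduction, and there is no real obstacle: no combinatorial argument beyond invoking the equivalence (i)$\Leftrightarrow$(iii) of \Cref{thm:char_hs=1} and the equivalence (i)$\Leftrightarrow$(ii) of \Cref{matching-covered} is required. The only thing to check is that both cited results apply under the stated hypotheses, namely that $G$ is $2$-connected bipartite (used to invoke \Cref{thm:char_hs=1}) and connected bipartite (automatic, since $2$-connected implies connected, so we may invoke \Cref{matching-covered}).
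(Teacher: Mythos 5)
Your proposal is correct and is exactly the paper's argument: the authors also deduce the corollary immediately from \Cref{thm:char_hs=1} combined with \Cref{matching-covered}, and you have simply spelled out which equivalences are being chained. No further comment is needed.
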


\begin{proof}
    It follows immediately from \Cref{matching-covered,thm:char_hs=1}.
\end{proof}

We summarize the results of \Cref{thm:char_hs=1,cor:hamilton,cor:biparregular,cor:evenver} in the following figure:

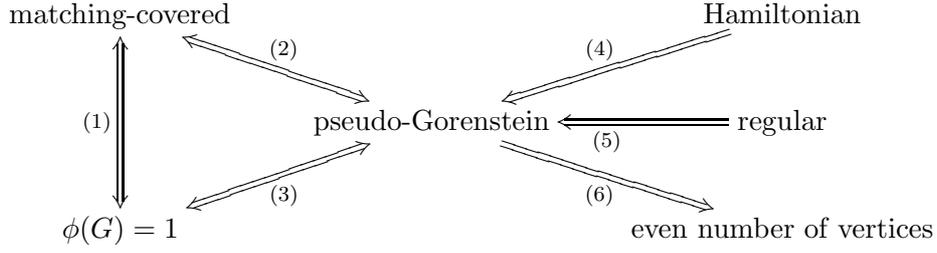
\begin{figure}[H]
    \[
        \xymatrix{
            \text{matching-covered} \ar@{<=>}[dd]_{(1)} \ar@{<=>}[dr]^{(2)} &  & \text{Hamiltonian} \ar@{=>}[dl]_{(4)} \\
            & \text{pseudo-Gorenstein}  & \text{regular} \ar@{=>}[l]^{(5)} \\
            \phi(G) = 1 \ar@{<=>}[ur]_{(3)} &  & \text{even number of vertices} \ar@{<=}[ul]^{(6)}
        }
    \]
    \caption{Implications for $2$-connected bipartite graph $G$}
    \label{Implication}
\end{figure}

\bigskip

\subsection{The case of non-bipartite graphs}\label{subsec:non-bip}
In this subsection, we investigate the pseudo-Gorensteinness of the edge rings of non-bipartite graphs.
Actually, most of the implications in \Cref{Implication} are no longer true.
We first give their counterexamples in the case of $2$-connected non-bipartite graphs satisfying the odd cycle condition.

\begin{ex}\label{counter}
    (i) Let $H_1$ be the graph on the vertex set $V(H_1)=\set{1,2,3,4}$ with the edge set $E(H_1)=\set{\set{1,2},\set{2,3},\set{3,4},\set{1,4},\set{1,3}}$ (see \Cref{counter2}).
    This graph is not matching-covered since there is no perfect matching containing $\set{1,3}$.
    On the other hand, we can compute $h(\kk[H_1];t)=1+t$, so $H_1$ is a counterexample to the implication  ``pseudo-Gorenstein $\Longrightarrow$ matching-covered''.

    \medskip

    (ii) Let $H_2$ be the graph on the vertex set $V(H_2)=\set{1,2,3,4,5,6,7}$ with the edge set $E(H_2)=\set{\set{1,2},\set{2,3},\set{3,4},\set{4,5},\set{1,5},\set{1,6},\set{3,6},\set{3,7},\set{5,7}}$ (see \Cref{counter45}).
    We can see that $\phi(H_2)=2$, $\card{V(H_2)}$ is odd and $h(\kk[H_2];t)=1+2t+t^2$.
    Thus, $H_2$ is a counterexample to the implications ``pseudo-Gorenstein $\Longrightarrow$ $\phi(G)=1$'' and ``pseudo-Gorenstein $\Longrightarrow$ even number of vertices''.
    \medskip

    (iii) Let $H_3$ be the graph on the vertex set $V(H_3)=\set{1,2,3,4,5,6}$ with the edge set $E(H_3)=\set{\set{1,2},\set{2,3},\set{3,4},\set{4,5},\set{5,6},\set{1,6},\set{1,3},\set{3,5},\set{3,6}}$ (see \Cref{counter37}).
    This graph is Hamiltonian, and
    we can see that $\phi(H_3)=1$ and $h(\kk[H_3];t)=1+3t+3t^2$.
    Therefore, $H_3$ is a counterexample to the implications ``$\phi(H_3)=1$ $\Longrightarrow$ pseudo-Gorenstein'' and ``Hamiltonian $\Longrightarrow$ pseudo-Gorenstein''.

    \medskip

    (iv) Consider the complete graph $K_5$ with five vertices.
    This graph is regular, but we can compute $h(\kk[K_5];t)=1 + 5t + 5t^2$.
    This implies that $K_5$ is a counterexample to the implication ``regular $\Longrightarrow$ pseudo-Gorenstein''.
\end{ex}

\begin{figure}[H]
    \centering
    \begin{minipage}[H]{0.32\linewidth}
        \centering
        \scalebox{0.8}{
            \begin{tikzpicture}
                \node[circle, draw] (1) at (2, 2) {1};
                \node[circle, draw] (2) at (0, 2) {2};
                \node[circle, draw] (3) at (0, 0) {3};
                \node[circle, draw] (4) at (2, 0) {4};

                \draw (1) -- (2);
                \draw (1) -- (3);
                \draw (1) -- (4);
                \draw (2) -- (3);
                \draw (3) -- (4);
            \end{tikzpicture}}
        \caption{\newline The graph $H_1$}
        \label{counter2}
    \end{minipage}
    \begin{minipage}[H]{0.32\linewidth}
        \centering
        \scalebox{0.8}{
            \begin{tikzpicture}
                \node[circle, draw] (1) at (0, 1.5) {1};
                \node[circle, draw] (2) at (1.5, 3) {2};
                \node[circle, draw] (3) at (3, 1.5) {3};
                \node[circle, draw] (4) at (4.5, 3) {4};
                \node[circle, draw] (5) at (6, 1.5) {5};
                \node[circle, draw] (6) at (1.5, 0) {6};
                \node[circle, draw] (7) at (4.5, 0) {7};

                \draw (1) -- (2) -- (3) -- (6) -- (1) -- cycle;
                \draw (3) -- (4) -- (5) -- (7) -- (3) -- cycle;
                \draw (1) .. controls (0, 3.5) .. (1.5, 3.5) -- (4.5, 3.5) .. controls (6, 3.5) .. (5);
            \end{tikzpicture}}
        \caption{\newline The graph $H_2$}
        \label{counter45}
    \end{minipage}
    \begin{minipage}[H]{0.32\linewidth}
        \centering
        \scalebox{0.8}{
            \begin{tikzpicture}
                \foreach \i in {1, 2, 3, 4, 5, 6} {
                        \node[circle, draw] (\i) at (\i * 60 : 1.5) {\i};
                    }
                \draw (1) -- (2) -- (3) -- (4) -- (5) -- (6) -- (1) -- cycle;
                \draw (1) -- (3);
                \draw (3) -- (5);
                \draw (3) -- (6);
            \end{tikzpicture}}
        \caption{\newline The graph $H_3$}
        \label{counter37}
    \end{minipage}
\end{figure}

In particular, each direction of the implication (3) in \Cref{Implication} is incorrect.
Moreover, the following example shows that the value of $\phi(G)$ does not depend on the pseudo-Gorensteinness of edge rings, that is, for any $k\in \ZZ_{>0}$, there exists a $2$-connected graph $G$ satisfying the odd cycle condition with $h_{s(\kk[G])}=1$ and $\phi(G)=k$:
\begin{ex}
    For $k\in \ZZ_{>0}$, let $\fkG_k$ be the graph on the vertex set $V(\fkG_k) \coloneqq \set{v_1,\ldots,v_{k+1}}\cup \set{u_1,\ldots,u_k} \cup \set{w_1,\ldots,w_k}$ with the edge set
    \[
        E(\fkG_k) \coloneqq \set{\set{v_1,v_{k+1}}} \cup \rbra{\bigcup_{i=1}^k \set{\set{v_i,u_i},\set{v_i,w_i},\set{u_i,v_{i+1}},\set{w_i,v_{i+1}}}}
    \]
    (see \Cref{anyphi}).
    \begin{figure}[H]
        \scalebox{0.8}{
            \begin{tikzpicture}
                \node[circle, draw] (1) at (0, 1.5) {\makebox[0.35cm]{$v_1$}};
                \node[circle, draw] (2) at (1.5, 3) {\makebox[0.35cm]{$u_1$}};
                \node[circle, draw] (3) at (1.5, 0) {\makebox[0.35cm]{$w_1$}};
                \node[circle, draw] (4) at (3, 1.5) {\makebox[0.35cm]{$v_2$}};

                \node[circle, draw] (5) at (4.5, 3) {\makebox[0.35cm]{$u_2$}};
                \node[circle, draw] (6) at (4.5, 0) {\makebox[0.35cm]{$w_2$}};
                \node[circle, draw] (7) at (6, 1.5) {\makebox[0.35cm]{$v_3$}};

                \draw (1) -- (2) -- (4) -- (3) -- (1) -- cycle;
                \draw (4) -- (5) -- (7) -- (6) -- (4) -- cycle;

                \node[circle, draw] (8) at (12.5, 3) {\makebox[0.35cm]{$u_k$}};
                \node[circle, draw] (9) at (11, 1.5) {\makebox[0.35cm]{$v_k$}};
                \node[circle, draw] (10) at (12.5, 0) {\makebox[0.35cm]{$w_k$}};
                \node[circle, draw] (11) at (14, 1.5) {\makebox[0.35cm]{$v_{k+1}$}};
                \draw (8) -- (9) -- (10) -- (11)-- (8) -- cycle;
                \draw[dashed] (7) -- ++ (0 : 2);
                \draw[dashed] (9) -- ++ (180 : 2);
                \draw (1) .. controls (0, 3.5) .. (1.5, 3.5) -- (12.5, 3.5) .. controls (14, 3.5) .. (11);
            \end{tikzpicture}
        }
        \caption{The graph $\fkG_k$}\label{anyphi}
    \end{figure}
    Then $\fkG_k$ is a $2$-connected graph satisfying the odd cycle condition with $\phi(\fkG_k)=k$.
    Indeed, any odd cycle of $\fkG_k$ contains the edge $\set{v_1,v_{k+1}}$, so $\fkG_k$ satisfies the odd cycle condition.
    Moreover, we can easily see that for any ear decomposition of $\fkG_k$, the number of even paths appearing in it is just $k$, which implies that $\phi(\fkG_k)=k$.

    We show that $\kk[\fkG_k]$ is pseudo-Gorenstein.
    From \cite[Lemmas~5.10 and 5.11]{herzog2018binomial}, we can see that
    \[
        I_{\fkG_k}=\rbra{b_i \coloneqq x_{\set{v_i,u_i}}x_{\set{w_i,v_{i+1}}}-x_{\set{u_i,v_{i+1}}}x_{\set{v_i,w_i}} : i \in [k]}.
    \]
    Since these binomials $b_i$ have no common variables, we obtain
    \[
        \kk[\fkG_k] \cong \rbra{\bigotimes_{i=1}^k \kk[x_{\set{v_i,u_i}},x_{\set{w_i,v_{i+1}}},x_{\set{u_i,v_{i+1}}},x_{\set{v_i,w_i}}]/(b_i)}[x_{\set{v_1,v_{k+1}}}]
    \]
    and $h(\kk[\fkG_k];t)=(1+t)^k$, which implies that $h_{s(\kk[\fkG_k])}=1$.
\end{ex}

The pseudo-Gorensteinness of the edge rings of non-bipartite graphs is quite different from that of bipartite graphs, and it seems difficult to completely characterize it.

We end this section by giving the following two sufficient conditions for edge rings to be pseudo-Gorenstein:
\begin{thm}\label{thm:MatchingNotBipar}
    Let $G$ be a matching-covered connected graph satisfying the odd cycle condition.
    Then $\kk[G]$ is pseudo-Gorenstein.
\end{thm}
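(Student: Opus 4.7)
The plan is to exhibit $v_M \coloneqq (1, 1, \ldots, 1)$ as the unique lattice point of $\int(\ell_G P_G)$, using a perfect matching of $G$ to produce it and the matching-covered hypothesis to rule out any other. Since $G$ is connected and satisfies the odd cycle condition, $\kk[G]$ is normal, so by \Cref{prop:326644C5} and the discussion preceding it, $h_{s(\kk[G])} = \card{\int(\ell_G P_G) \cap \ZZ^d}$. Let $M$ be a perfect matching of $G$ (which exists by matching-coveredness), and set $n \coloneqq \card{V(G)}$; then $n$ is even and $v_M = \sum_{e \in M} \rho(e)$ lies in $S_G$ at level $n/2$.

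The first step is to show $v_M \in \int(\RR_{\ge 0}\calA_G)$ using the H-representation from \Cref{facet}. The inequalities $v_M \in H_i^>$ for every $i \in [d]$ are immediate since all coordinates of $v_M$ equal $1$. For a non-empty independent set $T$, evaluating the linear form defining $H_T$ at $v_M$ yields $\card{N_G(T)} - \card{T}$. Any perfect matching pairs each vertex of the independent set $T$ with a distinct vertex of $N_G(T)$, so $\card{N_G(T)} \ge \card{T}$ unconditionally; strict inequality places $v_M$ in $H_T^>$.

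The main obstacle is the equality case $\card{N_G(T)} = \card{T}$. Here the plan is to use matching-coveredness to force $G = B(T)$, so that $G$ is bipartite with bipartition $T \sqcup N_G(T)$. Indeed, suppose $e = \set{u, w}$ is an edge of $G$ with $u \in N_G(T)$ and $w \notin T$. Extending $e$ to a perfect matching $M'$ of $G$ via matching-coveredness, the vertex $u$ is matched to $w \notin T$ in $M'$, so the vertices of $T$ must be matched within $N_G(T) \setminus \set{u}$, forcing $\card{T} \le \card{N_G(T)} - 1$, a contradiction. Thus no such edge exists; combined with the independence of $T$ and the connectedness of $G$, this yields $G = B(T)$. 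In the resulting bipartite situation every $\rho(e)$ satisfies the linear equation defining $H_T$, so $H_T$ contains the entire cone $\RR_{\ge 0}\calA_G$ and is not a facet, and $v_M \in H_T$ does not obstruct $v_M$ from lying in the relative interior.

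Finally, a coordinate-sum argument pins down $\ell_G = n/2$ and uniqueness in one stroke. From $v_M \in \int((n/2)P_G)$ we get $\ell_G \le n/2$. Conversely, any interior lattice point $v \in \int(k P_G) \cap \ZZ^d$ admits a strictly positive convex expression $v/k = \sum_e \lambda_e \rho(e)$; since every vertex belongs to some edge (matching-coveredness excludes isolated vertices), each coordinate satisfies $v_i > 0$ and hence $v_i \ge 1$, while $\sum_i v_i = 2k$ because every $\rho(e)$ has coordinate sum $2$. Therefore $2k \ge n$, giving $\ell_G = n/2$; at this level the $n$ positive-integer coordinates of $v$ summing to $n$ must all equal $1$, forcing $v = v_M$, and consequently $h_{s(\kk[G])} = 1$.
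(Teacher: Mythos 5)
Your proof is correct, and it reaches the same destination as the paper's --- exhibiting $(1,\ldots,1)$ as the unique lattice point of $\int(\ell_G P_G)$ --- but by a noticeably different route in the key step. The paper first disposes of the bipartite case by citing \Cref{thm:char_hs=1}, and then, for non-bipartite $G$, verifies $(1,\ldots,1)\in H_T^>$ by locating an edge $f\in E(G)\setminus E(B(T))$ meeting $N_G(T)$ (which exists precisely because $G$ is connected and not bipartite), extending $f$ to a perfect matching, and invoking \Cref{lem:indepe}. You instead treat all cases uniformly: you evaluate the linear form of $H_T$ at $(1,\ldots,1)$ to get $\card{N_G(T)}-\card{T}\ge 0$ via the perfect matching, and you resolve the equality case by showing (again via matching-coveredness) that it forces $G=B(T)$, so that $H_T$ contains the whole cone $\RR_{\ge 0}\calA_G$ and imposes no constraint on the relative interior. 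This buys you a self-contained argument that does not lean on the bipartite classification or on \Cref{lem:indepe}, at the cost of having to handle the degenerate hyperplanes explicitly; note that your equality-case analysis is essentially a re-proof of the implication (i)$\Rightarrow$(ii) of \Cref{matching-covered} restricted to the sets $T$ that arise. A further point in your favor: you make explicit the computation $\ell_G=\card{V(G)}/2$ and the uniqueness of the interior lattice point via the coordinate-sum argument, both of which the paper leaves implicit (here, as in the paper's own proof of \Cref{thm:char_hs=1}, one also needs the identification $h_s=\dim_\kk(\omega_{\kk[G]})_{\ell_G}=\card{\int(\ell_G P_G)\cap\ZZ^d}$, valid by normality). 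No gaps.
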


\begin{proof}
    Our assertion holds from \Cref{thm:char_hs=1} if $G$ is bipartite, thus we may assume that $G$ is not bipartite.
    Let $v=(1,\ldots,1)$, then $v$ belongs to $S_G$ since $G$ has a perfect matching.
    We show that $v$ is in the unique lattice point in $\int(\ell_G P_G)$.
    From \Cref{facet}, it suffices to prove that $v\in H_T^>$ for any independent set $T$ of $G$.

    Since $G$ is a non-bipartite connected graph, there exists $f\in E(G)\setminus E(B(T))$ with $f\cap N_G(T)\neq \emptyset$.
    We can take a perfect matching $\calM$ including $f$ since $G$ is matching-covered, and hence $v_\calM=v\in H_T^>$ from \Cref{lem:indepe}.
\end{proof}

\begin{thm}\label{thm:regular_hs=1}
    Let $G$ be a connected regular graph with an even number of vertices
    satisfying the odd cycle condition.
    Then $\kk[G]$ is pseudo-Gorenstein.
\end{thm}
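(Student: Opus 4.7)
The plan is to mirror the argument of \Cref{thm:MatchingNotBipar}: since \Cref{cor:biparregular} already handles the bipartite case, I reduce to $G$ non-bipartite and show that $v = (1,\ldots,1)$ is the unique lattice point in $\int(\ell_G P_G) \cap \ZZ^d$, which gives $h_s = 1$ directly.

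To verify $v \in \int(\RR_{\ge 0}\calA_G)$, I would invoke \Cref{facet}: the inequality $v_i > 0$ is automatic, and for every $T \in \calT_G$, since $G$ is non-bipartite while $B(T)$ is bipartite, we have $G \ne B(T)$; \Cref{lem:B(T)} (applied to the $k$-regular graph $G$) then forces $\card{N_G(T)} > \card{T}$, i.e.\ $v \in H_T^>$. To show $v \in \ZZ\calA_G$, I would pick an odd cycle $C = v_1 v_2 \cdots v_{2k+1} v_1$ in $G$ and observe that its alternating edge-vector sum telescopes to $2\eb_{v_1}$. Connectivity, combined with the relations $\eb_u + \eb_w \in \ZZ\calA_G$ across edges, then propagates this to $2\eb_u, \eb_u - \eb_w \in \ZZ\calA_G$ for all $u, w \in V(G)$, and the decomposition
\[
    v = \card{V(G)}\,\eb_{v_1} + \sum_{u \ne v_1}(\eb_u - \eb_{v_1})
\]
together with the evenness of $\card{V(G)}$ yields $v \in \ZZ\calA_G$.

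The odd cycle condition ensures $\kk[G]$ is normal, so $\int(S_G) = \int(\RR_{\ge 0}\calA_G) \cap \ZZ\calA_G$, whence $v \in \int(S_G)$. Any lattice point $p \in \int(\ell P_G) \cap \ZZ^d$ must satisfy $p_i \ge 1$ for all $i$ and $\sum_i p_i = 2\ell$, forcing $\ell \ge \card{V(G)}/2$; consequently $\ell_G = \card{V(G)}/2$ and $p = v$ is the only such point at level $\ell_G$, so $h_s = 1$. The main obstacle is the lattice step: unlike in \Cref{thm:MatchingNotBipar}, a perfect matching of $G$ is not automatically available here (a connected regular graph with an even number of vertices need not be matching-covered), so $v \in \ZZ\calA_G$ must be established intrinsically via the odd cycle rather than by exhibiting a matching.
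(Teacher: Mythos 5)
Your proof is correct, and its core step is exactly the paper's: reduce to the non-bipartite case via \Cref{cor:biparregular}, then use the regularity inequality \Cref{ineq_reg} together with \Cref{lem:B(T)} to conclude that $\card{N_G(T)}>\card{T}$ for every independent set $T$ (since $G=B(T)$ would force $G$ to be bipartite), so that $(1,\ldots,1)\in H_T^>$. Where you genuinely diverge is in justifying that $(1,\ldots,1)$ actually lies in $S_G$: the paper simply says ``as in the proof of \Cref{thm:MatchingNotBipar}'', but there this membership came from a perfect matching, and, as you rightly point out, a connected regular graph with an even number of vertices need not be matching-covered nor even obviously possess a perfect matching. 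Your replacement --- the telescoping alternating sum around an odd cycle giving $2\eb_{v_1}\in\ZZ\calA_G$, propagation along edges to get $2\eb_u$ and $\eb_u-\eb_w$ in $\ZZ\calA_G$, and then the evenness of $\card{V(G)}$ to place $(1,\ldots,1)$ in $\ZZ\calA_G$, combined with normality to upgrade cone membership to semigroup membership --- is sound and makes explicit a point the paper leaves to the reader. Your closing degree count ($p_i\ge 1$ and $\sum_i p_i=2\ell$ forcing $\ell_G=\card{V(G)}/2$ and uniqueness of the interior lattice point) matches the uniqueness argument implicit in \Cref{thm:MatchingNotBipar}. In short: same skeleton, but your version is self-contained on the one step where the paper's cross-reference does not literally transfer.
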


\begin{proof}
    Our assertion holds from \Cref{cor:biparregular} if $G$ is bipartite, thus we may assume that $G$ is not bipartite.
    As in the proof of \Cref{thm:MatchingNotBipar}, it is enough to show that $(1,\ldots,1) \in H_T^>$ for any independent set $T$ of $G$.
    We obtain $\card{T} \le \card{N_G(T)}$ by \Cref{ineq_reg}.
    If $\card{T} = \card{N_G(T)}$ holds, then we have $G = B(T)$ from \Cref{lem:B(T)},
    so $G$ is bipartite, which contradicts with the assumption.
    Therefore, for any independent set $T$, we have $\card{T} < \card{N_G(T)}$, and the point $(1, \ldots, 1)$ belongs to $H_T^>$.
\end{proof}

\bigskip

\section{Almost Gorenstein edge rings}\label{sec:almost}

\subsection{A new family of almost Gorenstein edge rings}\label{subsec:new_family}
In this subsection, we consider the following graph:
For integers $m, n \ge 3$ and $0 \le r \le \min\{m, n\}$, let $G_{m, n, r}$ be the graph on the vertex set $V(G_{m, n, r})=[m + n]$ with the edge set
$E(G_{m, n, r}) = \set{ \{i, j + m\} : i \in [m], j \in [n]} \setminus \set{\set{1, 1 + m}, \set{2, 2 + m}, \ldots, \set{r , r + m}}$.
We can easily see that $V(G_{m,n,r})$ has the partition $V(G_{m,n,r})=V_1\sqcup V_2$, where $V_1 \coloneqq \set{1,\ldots,m}$ and $V_2 \coloneqq \set{m+1,\ldots,m+n}$, and $G_{m,n,r}$ is bipartite.
Moreover, $G_{m,n,r}$ is $2$-connected and the subset $T\subset V_1$ is an acceptable set if and only if $T=\set{i}$ for some $i\in [r]$.

To compute the $h$-polynomial of $\kk[G_{m,n,r}]$, we recall the $h$-polynomial of the edge ring of a complete bipartite graph.
\begin{prop}[{\cite[Proposition~10.6.3]{villarreal2001monomial}}]\label{prop:00B7EDA0}
    Let $K_{m,n}$ denote the complete bipartite graph with $m+n$ vertices. Then
    \begin{equation*}
        h(\kk[K_{m,n}];t)=\sum_{i=0}^{\min\{m,n\}}\binom{m-1}{i}\binom{n-1}{i}t^i.
    \end{equation*}
\end{prop}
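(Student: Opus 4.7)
The plan is to exploit the Segre-product structure of $\kk[K_{m,n}]$. A $\kk$-basis of the degree-$k$ component $\kk[K_{m,n}]_k$ consists of the monomials $t_1^{a_1}\cdots t_m^{a_m}t_{m+1}^{b_1}\cdots t_{m+n}^{b_n}$ with $\sum_i a_i = k = \sum_j b_j$, so
\[
    \dim_\kk \kk[K_{m,n}]_k = \binom{k+m-1}{m-1}\binom{k+n-1}{n-1}.
\]
Since $K_{m,n}$ is connected and bipartite, it has $b(K_{m,n}) = 1$, so the Krull dimension of $\kk[K_{m,n}]$ is $m+n-1$ and $\dim P_{K_{m,n}} = m+n-2$. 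Moreover, bipartite graphs trivially satisfy the odd cycle condition, hence $\kk[K_{m,n}]$ is normal and Cohen--Macaulay, and the Hilbert function agrees with the Ehrhart polynomial $L_{P_{K_{m,n}}}(k)$.

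By \Cref{prop:326644C5}, the claim then reduces to the polynomial identity
\[
    \binom{k+m-1}{m-1}\binom{k+n-1}{n-1} = \sum_{i=0}^{\min\{m,n\}} \binom{m-1}{i}\binom{n-1}{i}\binom{k+m+n-2-i}{m+n-2}
\]
for all $k \ge 0$ (terms with $i$ exceeding $m-1$ or $n-1$ vanish automatically). One direct approach is induction on $\min\{m, n\}$, using the Pascal recursion $\binom{k+n-1}{n-1} = \binom{k+n-2}{n-1} + \binom{k+n-2}{n-2}$ on one of the factors and regrouping via a Chu--Vandermonde type rearrangement; the base $m = 1$ reduces to the identity $\binom{k+n-1}{n-1} = \binom{k+n-1}{n-1}$.

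A more conceptual route is to observe that $P_{K_{m,n}}$ is affinely isomorphic to the product of simplices $\Delta_{m-1}\times\Delta_{n-1}$ via $e_i + e_{m+j} \mapsto (e_i, e_j)$, and to invoke the classical computation of the $h$-polynomial of the staircase regular unimodular triangulation of $\Delta_{m-1}\times\Delta_{n-1}$: its maximal simplices correspond to monotone lattice paths from $(0,0)$ to $(m-1,n-1)$, and a shelling shows that the $h$-polynomial equals $\sum_i \binom{m-1}{i}\binom{n-1}{i}t^i$, where $\binom{m-1}{i}\binom{n-1}{i}$ counts paths with exactly $i$ corners. Normality of $\kk[K_{m,n}]$ together with unimodularity of the triangulation identifies this with $h(\kk[K_{m,n}];t)$. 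The main obstacle in either approach is the combinatorial identity itself: the inductive proof is routine but calculational, while the triangulation argument is conceptually cleaner but relies on the (standard) combinatorial analysis of the staircase triangulation and its shelling order.
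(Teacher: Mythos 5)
The paper gives no proof of this proposition; it is quoted directly from Villarreal's book, whose own argument is exactly your first route: identify $\kk[K_{m,n}]$ with the Segre product of two polynomial rings, compute $\dim_\kk \kk[K_{m,n}]_k=\binom{k+m-1}{m-1}\binom{k+n-1}{n-1}$, and extract the $h$-polynomial from the Hilbert series in Krull dimension $m+n-1$. Your reduction is correct (and the detour through normality and \Cref{prop:326644C5} is harmless, though unnecessary --- one can simply multiply the Hilbert series by $(1-t)^{m+n-1}$); the only part not carried out in full is the binomial identity $\binom{k+m-1}{m-1}\binom{k+n-1}{n-1}=\sum_i\binom{m-1}{i}\binom{n-1}{i}\binom{k+m+n-2-i}{m+n-2}$, which is indeed classical and is correctly established by either of the two routes you sketch.
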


\begin{prop}\label{prop:hilbG}
    We have
    \[
        h(\kk[G_{m, n, r}]; t) = 1 + \rbra{(m - 1)(n - 1) - r} t
        + \sum_{i = 2}^{\min\{m, n\}} \binom{m - 1}{i} \binom{n - 1}{i} t^i.
    \]
\end{prop}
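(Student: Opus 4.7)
The plan is to compute the Ehrhart polynomial $L_{P_{G_{m, n, r}}}(k)$ explicitly and then read off the $h$-polynomial using \Cref{prop:326644C5}. Since $G_{m, n, r}$ is connected and bipartite, it satisfies the odd cycle condition, so $\kk[G_{m, n, r}]$ is normal, $\dim P_{G_{m, n, r}} = \delta \coloneqq m + n - 2$, and the formula of \Cref{prop:326644C5} applies. The first step is to translate the facet description of $P_{G_{m, n, r}}$ into a concrete lattice-point count. Combining \Cref{facet} with the fact already recorded above that the acceptable subsets of $V_1$ are precisely the singletons $\{i\}$ for $i \in [r]$, the supporting hyperplane $H_{\{i\}}$ together with the degree constraint $\sum_{j \in V_2} x_j = k$ translates into the inequality $x_i + x_{i + m} \le k$. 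Hence
\[
    k P_{G_{m, n, r}} \cap \ZZ^{m + n} = \setcond{(a, b) \in \ZZ_{\ge 0}^m \times \ZZ_{\ge 0}^n}{\sum_{i} a_i = \sum_{j} b_j = k,\ a_i + b_i \le k \text{ for every } i \in [r]}.
\]

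With this description in hand, I would count by inclusion--exclusion. Let $A_i \coloneqq \setcond{(a, b)}{a_i + b_i > k}$ for $i \in [r]$. A key structural observation is that the sets $A_1, \ldots, A_r$ are \emph{pairwise disjoint}: if $a_i + b_i > k$ and $a_j + b_j > k$ both held for distinct $i, j \in [r]$, then $(a_i + a_j) + (b_i + b_j) > 2k$ would contradict the trivial bounds $a_i + a_j \le k$ and $b_i + b_j \le k$. Consequently
\[
    L_{P_{G_{m, n, r}}}(k) = L_{P_{K_{m, n}}}(k) - r \card{A_1}.
\]
The size of $A_1$ is computed by the substitution $a_1' = k - a_1,\ b_1' = k - b_1$, which turns the bad condition into $a_1' + b_1' \le k - 1$ with $(a_2, \ldots, a_m)$ and $(b_2, \ldots, b_n)$ free compositions of $a_1'$ and $b_1'$ respectively; the resulting double sum evaluates to $\binom{k + \delta - 1}{\delta}$ by the Vandermonde--Chu identity followed by the hockey-stick identity.

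Finally, combining \Cref{prop:00B7EDA0} with \Cref{prop:326644C5} gives the expansion of $L_{P_{K_{m, n}}}(k)$ in the basis $\set{\binom{k + \delta - i}{\delta}}_{i \ge 0}$. Subtracting $r \binom{k + \delta - 1}{\delta}$ modifies only the coefficient of $\binom{k + \delta - 1}{\delta}$, so comparing with \Cref{prop:326644C5} applied to $G_{m, n, r}$, and using the linear independence of $\set{\binom{k + \delta - i}{\delta}}_{i \ge 0}$ as polynomials in $k$, extracts the claimed $h$-polynomial. The only genuinely new input is the lattice-point description in the first paragraph, which is essentially handed to us by the paper's classification of the acceptable sets of $G_{m, n, r}$; the remaining inclusion--exclusion and binomial identity are routine.
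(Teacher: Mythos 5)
Your proof is correct and follows essentially the same strategy as the paper: both identify the lattice points of $kP_{K_{m,n}}$ that are lost in passing to $G_{m,n,r}$ as the disjoint union of the sets $A_i$ of points violating $x_i + x_{m+i} \le k$, show each $A_i$ has cardinality $\binom{k+m+n-3}{m+n-2}$, and read off the $h$-vector from the Ehrhart polynomial via \Cref{prop:326644C5}. The differences are confined to two sub-steps. For the identification $kP_{G_{m,n,r}} \cap \ZZ^{m+n} = (kP_{K_{m,n}} \cap \ZZ^{m+n}) \setminus A$, you invoke the half-space description coming from \Cref{facet} together with the paper's classification of the acceptable sets of $G_{m,n,r}$, whereas the paper proves the nontrivial inclusion by explicitly rewriting each surviving lattice point as a nonnegative integer combination of the edge vectors of $G_{m,n,r}$; your route is cleaner, though you should state explicitly that the listed hyperplanes contain all facets and, intersected with the affine hull $\sum_{j \in V_1} x_j = \sum_{j \in V_2} x_j = k$, cut out the dilated polytope exactly (this does follow from \Cref{facet}, since all vertices are ordinary and the acceptable sets are precisely the singletons $\set{i}$, $i \in [r]$). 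For the count of $\card{A_i}$, you use a change of variables plus the Vandermonde and hockey-stick identities, while the paper exhibits a bijection with $(k-1)$-element multisubsets of $[m+n]\setminus\set{i}$; both give the same answer. Your disjointness argument for the $A_i$ is the same observation the paper records without proof.
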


\begin{proof}
    By \Cref{prop:00B7EDA0,prop:326644C5}, it suffices to show that
    \[
        L_{P_{K_{m, n}}}(k) - L_{P_{G_{m, n, r}}}(k)
        = \card{k P_{K_{m, n}} \cap \ZZ^{m + n}} - \card{k P_{G_{m, n, r}} \cap \ZZ^{m + n}}
        = r \binom{k + m + n - 3}{m + n - 2}.
    \]
    For each $i = 1, \ldots, r$, let
    \[
        A_i \coloneqq \setcond{(x_1, \ldots, x_{m + n}) \in \ZZ^{m + n}_{\ge 0}}{x_i + x_{m + i} \ge k + 1, \sum_{j = 1}^m x_j = \sum_{j = 1}^n x_{m + j} = k}
    \]
    and $A \coloneqq A_1\cup \cdots \cup A_r$.
    Note that $A_i \cap A_j = \emptyset$ if $i \ne j$.
    We show that $A \subset k P_{K_{m, n}} \cap \ZZ^{m + n}$ and
    \[
        \card{A_i}=\binom{k + m + n -3}{m + n - 2} \text{ for each $i$, and hence }
        \card{A} = r \binom{k + m + n -3}{m + n - 2}.
    \]
    Let $S_i \coloneqq [m+n] \setminus \set{i}$ for each $i \in [r]$ and let $\sigma_i \colon S_i \to \RR^{m+n}$ be the map defined as follows:
    \[
        \sigma_i(l) \coloneqq
        \begin{cases*}
            \eb_l+\eb_{m+i} & if $1 \le l \le m$,        \\
            \eb_i+\eb_l     & if $m + 1 \le l \le m + n$
        \end{cases*}
        \text{ for $l\in S_i$}.
    \]
    Moreover, we denote the set of all $(k-1)$-element multisubsets on $S_i$ by $U_i$ and define the map $f_i \colon U_i\to \RR^{m + n}$ as
    \[
        f_i(u) \coloneqq \eb_i + \eb_{m+i} + \sum_{l\in u} \sigma(l)  \text{ for $u \in U_i$}.
    \]
    Then we can see that $f_i(U_i) = A_i$ and $f_i$ is a bijection on $A_i$.
    Therefore, we have $A_i \subset k P_{K_{m, n}} \cap \ZZ^{m + n}$ and
    $\card{A_i} = \card{U_i} = \binom{k+m+n-3}{m+n-2}$.

    It remains to show that
    $(k P_{K_{m, n}} \cap \ZZ^{m+n}) \setminus A = k P_{G_{m, n, r}} \cap \ZZ^{m+n}$.
    Any $a = (a_1, \ldots, a_{m + n}) \in A_i$ does not satisfy the inequality $\sum_{j \ne i} a_{m + j} - a_i \ge 0$ for any $i\in [r]$.
    Hence $a \notin H_{\set{i}}^{+}$, and consequently $a \notin kP_{G_{m, n, r}} \cap \ZZ^{m+n}$.
    Conversely, for any $a = (a_1, \ldots, a_{m+n}) \in (kP_{K_{m, n}} \cap \ZZ^{m+n}) \setminus A$,
    the element $a$ can be expressed as
    \[
        a = \sum_{i \in [m], j \in [n]} b_{i, j} (\eb_i + \eb_{m + j}) \text{ for some $b_{i, j} \in \ZZ_{\ge 0}$}.
    \]
    Since $(\eb_i + \eb_{m + i}) + (\eb_j + \eb_{m + j}) = (\eb_i + \eb_{m + j}) + (\eb_j + \eb_{m + i})$,
    we may assume $b_{i, i} \ge 0$ and $b_{j, j} = 0$ for $j \in [m] \setminus \{i\}$.
    Suppose $b_{i, i} > 0$. Then there exist $j, l \in [m] \setminus \{i\}$ such that $b_{j, l} > 0$ since $a_i + a_{m + i} \le k$,
    and we have $(\eb_i + \eb_{m + i}) + (\eb_j + \eb_{m + l}) = (\eb_i + \eb_{m + l}) + (\eb_j + \eb_{m + i})$.
    Hence we may assume $b_{i, m+i} = 0$, and therefore we have $a \in kP_{G_{m, n, r}}\cap \ZZ^{m+n}$.
\end{proof}

\begin{lem}\label{lem:canoG}
    We have $\mu(\omega_{\kk[G_{n,n,r}]})\ge r(n-3)+1$.
\end{lem}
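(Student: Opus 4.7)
The plan is to exhibit $r(n-3) + 1$ distinct minimal generators of $\omega_{\kk[G_{n,n,r}]}$. Since $\kk[G_{n,n,r}]$ is normal (bipartite edge rings are), its canonical module, viewed as the monomial ideal whose $\kk$-basis is indexed by $\int(S_G) \cap \ZZ^{2n}$, has the property that a lattice point $a \in \int(S_G)$ is a minimal generator exactly when $a - \rho(e) \notin \int(S_G)$ for every $e \in E(G_{n,n,r})$. (Indeed, if $a = b + c$ with $c \in \int(S_G)$ and $b \in S_G \setminus \{0\}$, writing $b = \rho(e) + b'$ gives $a - \rho(e) = b' + c \in \int(S_G)$.)

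The first step is to identify the acceptable sets of $G_{n,n,r}$ via \Cref{facet}. A direct case analysis shows that the only acceptable sets inside $V_1 = [n]$ are the singletons $T = \{i\}$ with $i \in [r]$: if $i > r$ or $\card{T} \geq 2$, then $N_G(T) = V_2$, so $G \setminus V(B(T))$ is an induced subgraph on a subset of $V_1$ and hence has no edges. Combining this with the degree constraint $\sum_{j \in V_2} a_j = \deg(a)$, we see that $a \in \int(S_G) \cap \ZZ^{2n}$ if and only if all coordinates of $a$ are positive and $a_i + a_{n+i} < \deg(a)$ for every $i \in [r]$.

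With this characterization, I would introduce the explicit family
\[
    a_0 \coloneqq (1, 1, \ldots, 1), \qquad a_k^{(i)} \coloneqq a_0 + k(\eb_i + \eb_{n+i})
\]
for $i \in [r]$ and $k \in \{1, \ldots, n-3\}$, which lies in degree $n + k$. Interior membership reduces to the single inequality $2(k+1) < n + k$, i.e., $k \leq n - 3$, which is satisfied by construction (the case $n = 3$ being vacuous). These $1 + r(n-3)$ points are pairwise distinct, since different values of $k$ yield different degrees, while different $i$ yield different supports in a given degree.

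The main step, and the one requiring the most care, is the verification of minimality. For $a_0$, subtracting any $\rho(\{l, n+m\})$ immediately produces a zero coordinate, so $a_0$ is a minimal generator. For $a_k^{(i)}$ with $k \geq 1$, subtracting $\rho(\{l, n+m\})$ decreases the $l$-th and $(n+m)$-th coordinates by $1$; since every $V_1$-coordinate of $a_k^{(i)}$ other than the $i$-th equals $1$, positivity forces $l = i$, and analogously $m = i$, so the only candidate edge is $\{i, n+i\}$. But this is precisely one of the edges removed to form $G_{n,n,r}$, so no valid subtraction exists, and $a_k^{(i)}$ is a minimal generator. Counting then yields $\mu(\omega_{\kk[G_{n,n,r}]}) \geq 1 + r(n-3)$, as desired.
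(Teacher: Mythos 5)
Your proof is correct and essentially identical to the paper's: the points $a_k^{(i)}$ are exactly the paper's $v_{i,j}$ with $j=k+1$, and the minimality argument (positivity of the remaining coordinates forces the subtracted edge to be $\{i,n+i\}$, which was removed) is the same. One small point in your favor: you correctly restrict to $i\in[r]$, whereas the paper's displayed range $i\in\{1,\ldots,n\}$ appears to be a typo, since for $i>r$ the edge $\{i,i+n\}$ is present and $v_{i,j}=v_{i,j-1}+\rho(\{i,i+n\})$ is not a minimal generator.
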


\begin{proof}
    It is enough to show that the set of the lattice points in $\int(S_{G_{n,n,r}})$ corresponding to the minimal generators of $\omega_{\kk[G_{n,n,r}]}$ contains the following:
    \[
        v_{i,j} \coloneqq (1,\ldots,1,\overset{i}{\check{j}},1,\ldots,1,\overset{i+n}{\check{j}},1,\ldots,1) \text{ for }  i\in \{1,\ldots, n\}  \text{ and } j\in \{1,\ldots,n-2\}.
    \]
    We can see that $v_{i,j}$ belongs to $\int(S_{G_{n,n,r}})$ since $v_{i,j}\in H_k^>$ for any $k\in [2n]$ and $v_{i,j}\in H_{\set{l}}^>$ for any $l\in [r]$.
    If we can write $v_{i,j}=v'+\rho(e)$ for some $v'\in \int(S_{G_{n,n,r}})$ and $e\in E(G_{n,n,r})$, then $e=\set{i,i+n}$ since $v'\in H_k^>$ for any $k\in [2n]$, a contradiction to $\set{i,i+n}\notin E(G_{n,n,r})$.
    Therefore, $v_{i,j}$ cannot be written as a sum of an element in
    $\int(S_{G_{n,n,r}})$ and an element in $S_{G_{n,n,r}}\setminus \set{0}$, which is the desired result.
\end{proof}

\begin{thm}\label{thm:almGnr}
    The edge ring $\kk[G_{m,n,r}]$ is almost Gorenstein if and only if $m=n$.
\end{thm}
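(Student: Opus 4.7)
The strategy is to split the biconditional and handle the two directions with different tools: the ``only if'' direction via the leading-coefficient test of \Cref{thm:almh_s=1}, and the ``if'' direction via a direct evaluation of $\widetilde{e}(\kk[G_{n,n,r}])$ sandwiched between two inequalities, invoking \Cref{thm:alm}.

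For the ``only if'' direction, note that $G_{m,n,r}$ is invariant under swapping the two sides of its bipartition, so I may assume $m\le n$. From \Cref{prop:hilbG}, since $\binom{m-1}{m}=0$, the socle degree is $s=m-1$, which satisfies $s\ge 2$ because $m\ge 3$, and the leading coefficient is $h_s=\binom{n-1}{m-1}$. Because $\binom{n-1}{k}\ge 2$ for every $1\le k\le n-2$ when $n\ge 3$, this equals $1$ if and only if $m=n$. Almost Gorensteinness forces $h_s=1$ by \Cref{thm:almh_s=1}, so $m=n$ follows.

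For the ``if'' direction, set $m=n$ and write $b_i\coloneqq \binom{n-1}{i}^2$; by \Cref{prop:00B7EDA0} these are the coefficients of the $h$-polynomial of $\kk[K_{n,n}]$, and the sequence $(b_0,\ldots,b_{n-1})$ is palindromic. By \Cref{prop:hilbG}, the $h$-vector of $\kk[G_{n,n,r}]$ is obtained from this palindrome by subtracting $r$ from the single coefficient $b_1$. Because of this palindromic symmetry, for each $j\in\{0,\ldots,n-2\}$ the difference $(h_s+\cdots+h_{s-j})-(h_0+\cdots+h_j)$ is governed purely by whether the windows $\{n-1-j,\ldots,n-1\}$ and $\{0,\ldots,j\}$ contain the perturbed index $1$: one checks that it equals $r$ for $1\le j\le n-3$ and vanishes for $j=0$ and $j=n-2$, so summing yields $\widetilde{e}(\kk[G_{n,n,r}])=r(n-3)$.

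Finally, combining this with the lower bound $\mu(\omega_{\kk[G_{n,n,r}]})\ge r(n-3)+1$ from \Cref{lem:canoG} and the universal inequality $\widetilde{e}(R)\ge \mu(\omega_R)-1$ stated just before \Cref{thm:alm}, one obtains
\[
r(n-3)+1\le \mu(\omega_{\kk[G_{n,n,r}]})\le \widetilde{e}(\kk[G_{n,n,r}])+1=r(n-3)+1,
\]
forcing the middle equality $\widetilde{e}(\kk[G_{n,n,r}])=\mu(\omega_{\kk[G_{n,n,r}]})-1$, and almost Gorensteinness then follows from \Cref{thm:alm}. I expect the main obstacle to be the boundary-case bookkeeping in the partial-sum computation, specifically ensuring that the $-r$ perturbation at position $1$ and its mirror image cancel correctly at $j=0$ and $j=n-2$; once this is handled, the sandwich of inequalities closes automatically.
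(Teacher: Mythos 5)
Your proposal is correct and follows essentially the same route as the paper: the ``only if'' direction via \Cref{thm:almh_s=1} applied to the leading coefficient $h_s=\binom{n-1}{m-1}$ from \Cref{prop:hilbG}, and the ``if'' direction by sandwiching $\mu(\omega_{\kk[G_{n,n,r}]})-1$ between $\widetilde{e}(\kk[G_{n,n,r}])=r(n-3)$ and the lower bound of \Cref{lem:canoG}, then invoking \Cref{thm:alm}. You merely spell out the partial-sum computation of $\widetilde{e}$ and the identification of the socle degree, which the paper leaves implicit.
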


\begin{proof}
    If $\kk[G_{m,n,r}]$ is almost Gorenstein, then we have $h_{s(\kk[G_{m,n,r}])}=1$ from \Cref{thm:almh_s=1}, so we get $m=n$.

    Suppose that $m=n$.
    It follows from \Cref{prop:hilbG} and \Cref{lem:canoG} that
    \[
        r(n-3)=\widetilde{e}(\kk[G_{n,n,r}]) \ge \mu(\omega_R)-1 \ge r(n-3).
    \]
    Therefore, we have $\widetilde{e}(\kk[G_{n,n,r}])=\mu(\omega_R)-1$ and $\kk[G_{n,n,r}]$ is almost Gorenstein from \Cref{thm:alm}.
\end{proof}

\subsection{Observations and questions on almost Gorenstein edge rings}

\Cref{prop:hilbG} and \Cref{thm:almGnr} tell us that $\kk[G_{n,n,r}]$ is almost Gorenstein and its $h$-vector $(h_0,h_1,\ldots,h_s)$ satisfies the following condition:
\begin{equation}\label{h_condition}
    h_i=h_{s-i} \text{ for } i=0,2,3,\ldots,\floor{s/2}. \tag{\text{$*$}}
\end{equation}
As far as the authors know, the $h$-vectors of almost Gorenstein edge rings discovered so far satisfy the condition \Cref{h_condition}.
For example, the edge ring of a complete graph $K_{2m}$ is almost Gorenstein (\cite[Theorem~1.3]{higashitani2022levelness}), its $h$-vector has been computed (\cite[Theorem~3.10]{villarreal1996normality}) and satisfies condition \Cref{h_condition}.

Moreover, the edge ring of a complete multipartite graph $K_{1,n,n}$ ($n\ge 2$) is almost Gorenstein (\cite[Theorem~1.3]{higashitani2022levelness}), which is isomorphic to a certain Hibi ring (\cite[Proposition~2.2]{higashitani2022conic}).
The $h$-vector of this Hibi ring satisfies condition \Cref{h_condition} (\cite[Theorem~5.3]{higashitani2016almost}).

Furthermore, the $h$-vectors of the edge rings of a certain family of graphs $\calG_n$, consisting of $n$ triangles that share a single common vertex, have been investigated in \cite{higashitani2023h}.
According to \cite[Theorem~1.1]{higashitani2023h}, $\kk[\calG_n]$ is almost Gorenstein and its $h$-vector satisfies condition \Cref{h_condition}.

These results naturally pose us with the following question:
\begin{q}
    Do the $h$-vectors of almost Gorenstein edge rings satisfy condition \Cref{h_condition}?
\end{q}
Actually, the following example gives a negative answer to this question:

\begin{ex}
    Let $\calP$ be the Petersen graph (see \Cref{fig:Petersen graph}).
    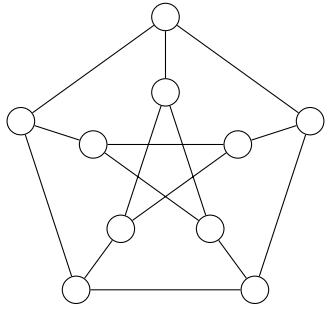
\begin{figure}[H]
        \centering
        \begin{tikzpicture}
            \foreach \i in {1,...,5} {
                    \node[draw, circle] (o\i) at (90+72*\i:2cm) {};
                    \node[draw, circle] (i\i) at (90+72*\i:1cm) {};
                    \draw (o\i) -- (i\i);
                }
            \draw (o1) -- (o2) -- (o3) -- (o4) -- (o5) -- (o1);
            \draw (i1) -- (i3) -- (i5) -- (i2) -- (i4) -- (i1);
        \end{tikzpicture}
        \caption{Petersen graph}\label{fig:Petersen graph}
    \end{figure}
    We can see that $\calP$ satisfies the odd cycle condition.
    Moreover, we can check
    \[
        h(\kk[\calP];t)=1+5t+15t^2+25t^3+5t^4+t^5
    \]
    by using \texttt{MAGMA} (\cite{magma}).
    It follows from \cite[Theorem~3.1]{higashitani2016almost} that $\kk[\calP]$ is not Gorenstein but almost Gorenstein.
    Moreover, this $h$-vector does not satisfy condition \Cref{h_condition}.
\end{ex}

We can still see that the $h$-vectors of almost
Gorenstein edge rings are ``almost symmetric'', which means that for $i=0,\ldots,\floor{s/2}$, the equality $h_i=h_{s-i}$ holds for all but at most one value of $i$.
Unfortunately, that is also not true in general.

\begin{ex}
    Let $W_{10}$ be the wheel graph with $10$ vertices (see \Cref{fig:wheel_10}).
    \begin{figure}[H]
        \scalebox{0.6}{
            \centering
            \begin{tikzpicture}
                \def\n{10}
                \foreach \i in {1,...,\n} {
                        \node[draw, circle, minimum size=12pt, inner sep=0pt] (P\i) at ({90+360/\n*\i}:3) {};
                    }
                \foreach \i in {1,...,\n} {
                        \pgfmathtruncatemacro{\nexti}{mod(\i,\n)+1}
                        \draw (P\i) -- (P\nexti);
                    }

                \node[draw, circle, minimum size=12pt, inner sep=0pt] (Center) at (0,0) {};
                \foreach \i in {1,...,\n} {
                        \draw (Center) -- (P\i);
                    }
            \end{tikzpicture}}
        \caption{The wheel graph $W_{10}$}\label{fig:wheel_10}
    \end{figure}
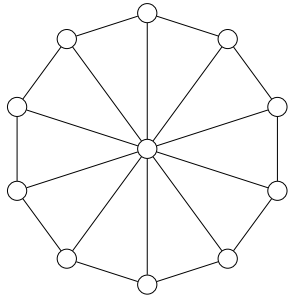
    This graph satisfies the odd cycle condition and
    \texttt{MAGMA} tells us that the $h$-polynomial of $\kk[W_{10}]$ is
    \[
        h(\kk[W_{10}];t)=1+8t+27t^2+30t^3+9t^4+t^5
    \]
    and that $\mu(\omega_{\kk[W_{10}]})=7$.
    Therefore, we have $\widetilde{e}(\kk[W_{10}])=\mu(\omega_{\kk[W_{10}]})-1=6$, and hence $\kk[W_{10}]$ is almost Gorenstein from \Cref{thm:alm}.
\end{ex}

While these counterexamples exist, we have yet to find the edge ring of a ``bipartite graph'' that is almost Gorenstein and whose $h$-vector does not satisfy condition \Cref{h_condition}.

\begin{q}
    Do the $h$-vectors of the almost Gorenstein edge rings of bipartite graphs satisfy condition \Cref{h_condition}?
\end{q}

\bibliographystyle{plain}
\bibliography{References}

\end{document}